\title{A 3D Machine Learning based Volume Of Fluid scheme without explicit interface reconstruction}
\author{Moreno Pintore\thanks{Laboratoire Jacques-Louis Lions, LJLL, Sorbonne Universit\'e; MEGAVOLT Team, INRIA; F-75005 Paris, France, (moreno.pintore@sorbonne-universite.fr).}
\and Bruno Despr\'es\thanks{Laboratoire Jacques-Louis Lions, LJLL, Sorbonne Universit\'e; Universit\'e Paris Cit\'e; CNRS; MEGAVOLT Team, INRIA; F-75005 Paris, France,
  (bruno.despres@sorbonne-universite.fr).}
}
\begin{document}

\maketitle

\begin{abstract}
We present a machine-learning based Volume Of Fluid method to simulate multi-material flows on three-dimensional domains. One of the novelties of the method is that the flux fraction is computed by evaluating a previously trained neural network and without explicitly reconstructing any local interface approximating the exact one. 
The network is trained on a purely synthetic dataset generated by randomly sampling numerous local interfaces and which can be adapted to improve the scheme on less regular interfaces when needed. Several strategies to ensure the efficiency of the method and the satisfaction of physical constraints and properties are suggested and formalized. Numerical results on the advection equation are provided to show the performance of the method.
We observe numerical convergence as  the size of the mesh tends to zero  $h=1/N_h\searrow 0$, with a better rate than two reference schemes.
\end{abstract}

\begin{keywords}
Volume Of Fluid, Machine Learning, CFD, 3D, Interface
\end{keywords}

\begin{MSCcodes}
35Q35, 68T07, 76-10, 76M12.
\end{MSCcodes}

\section{Introduction}
Multi-material or multi-phase numerical simulations are crucial in numerous engineering applications, a non-exhaustive list of examples includes water waves, nuclear reactors, jets and sprays, oil or gas pipelines, thermal power stations, and mold filling applications. 
Due to the presence of sharp interfaces between the involved materials, performing accurate simulations is complex and requires ad-hoc techniques. Indeed, a numerical scheme with too much diffusion would not be able to capture the sharp and discontinuous nature of the interface, whereas a scheme with less diffusion might not be stable enough to properly simulate the evolution of the phenomenon of interest.
In this context, the Volume Of Fluid (VOF) method is one of the most commonly used techniques when exact conservation of one or more physical quantities is needed. Even though the first papers on the VOF method are not recent, see  \cite{hirt1981volume} and later references, the research field is still very active  because of the importance of the applications and the complexity of the problem, which further increases for three-dimensional problems involving more than two materials. See, for example, \cite{Mengi,Tavi,Hani,Zhaoi} and references therein. VOF methods are largely based on geometrical heuristic algorithms, even though a recent work establishes
error estimates in some cases \cite{cohen2025high}.
The essence of most VOF methods we know about (see our reference list)  is to reconstruct some interface parameters from cell averages, and then to use this information to design  numerical fluxes in a Finite Volume scheme.

Our work in three dimensions  is based on the coupled VOF and Machine Learning  methodology (coined as VOFML) for two-dimensional interfaces  proposed in 
\cite{despres2020machine,ancellin2023extension}. Its main feature, which is also  its main originality,
 is that  we do not  reconstruct interface parameters. Instead, we reconstruct the flux directly from the cell averages.
The  contribution of this work is to provide evidence that such a 3D VOFML is efficient.
Since we do not  
explicitly reconstruct three-dimensional 
interface parameters,
 it results in a tremendous conceptual simplification 
with respect to most of the literature on VOF schemes with or without Machine Learning, for which we refer the reader to  
\cite{hirt1981volume,fakhreddine2024directionally,pilliod2004second,weymouth2010conservative,dyadechko2008reconstruction,kawano2016simple,pathak2016three,cahaly2024plic} and references therein.
 In our opinion, this conceptual simplification  is an example of the kind of new algorithms that Machine Learning can contribute to generate.

The general context of VOF methods is easily explained  on our model problem.
It is  a practical simple scenario with two incompressible immiscible fluids, fluid A and fluid B, in the space-time domain $\Omega\times[0,T]$, with $\Omega\in\R^3$ and $T>0$. Let $\alpha:\Omega\times[0,T]\rightarrow\R$ be the characteristic function of the fluid A, i.e. $\alpha(x,y,z,t)=1$ if the position $(x,y,z)\in\Omega$ is occupied by the fluid A at time $t\in[0,T]$, and  $\alpha(x,y,z,t)=0$ otherwise. Because of the immiscible and incompressible nature of the fluids, the conservation of mass implies the following conservation equation for $\alpha$:
\begin{equation}\label{eq:prob}
\partial_t \alpha + \nabla\cdot(\alpha u) = \partial_t \alpha + u\cdot \nabla\alpha u= 0,
\end{equation}
where $u$ is a divergence-free velocity field  
$
\nabla \cdot u=0$. 
In order to numerically solve \eqref{eq:prob}, one introduces a mesh on $\Omega$ and associates with each cell $C$ of such a mesh the volume fraction 
\[
\alpha_C = \dfrac{1}{|C|}\int_C \alpha\, dx\,dy\,dz,
\]
where $|C|$ is the volume of the cell $C$. Note that $\alpha_C=1$ if $C$ contains only fluid A, $\alpha_C=0$ if it contains only fluid B, and $\alpha_C\in(0,1)$ if it contains both fluids. The idea of the VOF method is to use the volume fractions available at time $t$ to compute the volume fractions at time $t+\Delta t$ without explicitly knowing the distribution of the two fluids, and to iterate this procedure to cover the entire time interval $[0,T]$ to mimic the evolution of the unknown function $\alpha$. The incompressibility hypothesis can be removed for real  applications.

In order to compute the fluxes required to compute the volume fractions at the subsequent time step, most VOF schemes explicitly reconstruct, for each cell $C$, a simple interface that approximates the exact unknown interface between the two fluids. This simple interface is computed using the volume fractions of the cell $C$ and its neighbor cells (such a set of cells is named stencil), and it is often represented by affine or quadratic functions of $\x$. In particular, the most popular class of VOF methods adopts the Piecewise Linear Interface Calculation (PLIC) approach, where the interface is represented by a planar polygon in each cell. The most famous PLIC representatives are the Youngs method \cite{youngs1982time}, where the plane normal ${\mathbf n}_\alpha$ is computed by using the relation ${\mathbf n}_\alpha = \frac{\nabla \alpha}{|\nabla\alpha|}$ and its location is computed by matching the the volume fraction inside $C$, the LVIRA method (Least-square Volume-of-fluid Interface Reconstruction Algorithm) and its efficient variant ELVIRA (Efficient LVIRA) \cite{pilliod2004second}, where the planar interface is computed by solving a least squares minimization problem in each cell, and the height function method \cite{weymouth2010conservative}, where the interface normal is efficiently computed using a height-function technique. 
Because of the longevity of VOF methods, several alternatives exist. For example, it is possible to avoid the use of a stencil by introducing additional variables in each cell that represent the position of the center of mass of the fluid inside the cell, as in the Moment Of Fluid method \cite{dyadechko2008reconstruction}, or to bypass the interface reconstruction by using explicit algebraic formulas \cite{despres2001contact} to directly compute the required fluxes.

When considering three-dimensional simulations, however, even the implementation of standard PLIC methods on Cartesian meshes is complex and it is often advisable to introduce further approximations \cite{kawano2016simple} to simplify the implementation and reduce the associated computational cost. The situation is even more complex when more than two fluids are present in the simulation, because simple interfaces are not expressive enough to properly represent the complex distributions of more than two fluids in a single cell. For example, in \cite{caboussat2008numerical} an additional minimization procedure in each cell with 3 fluids is considered to localize the triple point, and in \cite{pathak2016three} a different minimization strategy is adopted, assuming that the fluid ordering is known a priori, to localize each interface. Such minimization strategies often rely on iterative clipping of polygons, which is computationally expensive in 3D. Therefore, it is often necessary to consider alternative and more complex implementations like the one proposed in \cite{kromer2023efficient} to reduce the computational overhead.
Because of all these sources of complexity and thanks to the rapidly increasing popularity of Scientific Machine Learning, in the last few years, numerous techniques based on neural networks \cite{goodi} have been proposed. For example, in \cite{cahaly2024plic} and in \cite{rushdi2025efficient} two PLIC variants in which the interface normal is computed through a neural network are proposed, in \cite{nakano2025machine} a graph neural network is used to describe the local interface in unstructured meshes, in \cite{qi2019computing} and in \cite{kraus2021deep} the neural network is involved only in the curvature computation, whereas in \cite{mak2025machine} the authors propose a neural network to directly predict the volume fractions at the subsequent time step without computing and using the fluxes.

In this manuscript, we focus 
on extending to  three-dimensional simulations the Volume Of Fluid - Machine Learning (VOF-ML) method proposed in \cite{despres2020machine}, which is an alternative VOF method relying on a neural network. As discussed in \cite{despres2020machine} for two-dimensional problems, the main idea of VOF-ML is to train a neural network to predict, given the volume fractions in a suitable stencil, the flux that has to be used in a standard Finite Volume (FV) scheme \cite{leveque2002finite}. Moreover, no simulations or physical measurements are required to train the network, because the training dataset is cheaply assembled from geometrical considerations. In fact, the training dataset contains several distributions of pairs of fluids in a stencil and the corresponding fluxes, and this information does not depend on the equation of interest, the type or number of materials, or the domain $\Omega$. All this additional information is only used by the FV scheme that uses the flux predicted by the neural network. For example, even if the network is trained on pairs of fluids, simulations with more than two fluids can be performed as discussed in \cite{ancellin2023extension} without retraining the neural network on distributions including more materials. The manuscript contains the description of the method and a few implementation details that can be used to enforce physical properties and more accurately compute the fluxes.

To summarize, we provide a list of features that characterize the proposed method:
\begin{itemize}
\item it addresses  3D configurations;
\item it exploits the   capabilities and nonlinearity of deep neural networks; 
\item  in the training phase it   exploits the interface information, but then in the inference phase   it computes the fluxes without explicitly reconstructing the interface;
\item it easily handles distributions where the interface is not regular, which is particularly important in presence of more than two materials;
\item its implementation is easier than standard three-dimensional VOF methods and its efficiency relies on highly efficient deep learning libraries (e.g. Tensorflow \cite{tensorflow}, PyTorch \cite{pytorch} or JAX \cite{jax}). 
\end{itemize}

The manuscript is organized as follows. In Section \ref{sec:method} we describe the basic concepts of the method, with particular focus on the neural network architecture, on the training dataset generation, on the network training and on its usage inside a FV solver. More advanced strategies, introduced to improve the efficiency and accuracy of the scheme and to enforce physical constraints, are discussed in Section \ref{sec:strategies}. Numerical results are shown in Section \ref{sec:numerical_results} to empirically validate the method, whereas the final conclusions are presented in Section \ref{sec:conclusion}.

\section{Method description}\label{sec:method}

As discussed in the Introduction, let us consider equation \eqref{eq:prob} on the space-time domain $\Omega\times[0,T]$. We consider a uniform Cartesian mesh ${\cal T}_h$ on the cubic
domain $\Omega=[0,L]^3$ comprising $N_h$ cells in each direction. Generalization to Cartesian meshes with elongated cells or with a different number of cells in each direction is straightforward. Using a standard notation, we denote by $C_{i,j,k}$, $i,j,k=1,\dots,N_h$, a generic cell of ${\cal T}_h$ 
$$
C_{i,j,k}= \left\{
(x,y,z)\in \Omega :  ih < x <(i+1)h, \  jh < y <(j+1)h, \
 kh < z <(k+1)h.
\right\}
$$
 Moreover, we also introduce the stencil $S_{i,j,k}^m$ of margin $m\in\N$ and centered in $C_{i,j,k}$ as the indices   of the $m^3$ cells around $C_{i,j,k}$
\[
S_{i,j,k}^m = \{ (i',j',k') :\,\,\, |i-i'|\le m,\,\,\, |j-j'|\le m,\,\,\, |k-k'|\le m\}.
\]
The stencil has a central role in any VOF scheme, because the volume fractions of its cells are used to compute the flux needed to compute the volume fractions at the next time instant. A two-dimensional schematic representation of the VOF method is shown in Figure \ref{fig:vof_idea}. 

\begin{figure}[!ht]
\centering
\begin{subfigure}{0.49\textwidth}
\centering
    \includegraphics[width=0.7\textwidth, clip]{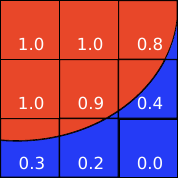}
    \caption{Example of volume fractions.}
\end{subfigure}
\begin{subfigure}{0.49\textwidth}
\centering
    \includegraphics[width=0.7\textwidth, clip]{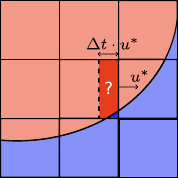}
    \caption{Representation of the unknown flux.}
    \label{fig:vof_idea_flux}
\end{subfigure}
\caption{Example of volume fractions and flux for a given distribution. The fluid A occupies the red region. The velocity in the central cell is $u^*$, the time-step length is $\Delta t$.}
\label{fig:vof_idea}
\end{figure}

The basic idea of the VOF-ML method is to exploit the nonlinear approximation capabilities of neural networks to capture the complex relationship between the volume fractions in each cell of a given stencil $S_{i,j,k}^m$, the velocity of the fluid inside the cell $C_{i,j,k}$, and the flux exiting from the cell $C_{i,j,k}$. Such a relationship is the central part of a VOF method, since the volume fractions and the velocity fields are known, but the flux is unknown. The flux reconstruction is then used in a standard FV scheme to obtain the new volume fractions.

The VOF-ML method is characterized by three main components:

\begin{itemize}
\item the generation of the dataset, used to train a neural network;
\item the training of the involved neural network;
\item the use of the trained neural network inside a standard FV scheme.
\end{itemize}

Each of these components is described in detail in the next subsections.

\subsection{A generic  neural network architecture}\label{sec:network}

We only consider fully-connected feed-forward neural networks. This is the simplest existing architecture, but it is very flexible and allows to accurately approximate very nonlinear and non-smooth operators. 

Let $L$ be the number of layers, let $A_\ell\in\mathbb{R}^{a_\ell\times a_{\ell-1}}$ and $b_\ell\in\mathbb{R}^{a_\ell}$, $\ell=1,...,L$, be $L$ matrices and vectors of known dimension and let $\rho:\R\rightarrow\R$ be a fixed nonlinear function, which can be applied element-wise to define the operator $\rho:\R^{a_\ell}\rightarrow\R^{a_\ell}$, $\ell=1,...,L$.
We define the parametric function $\alpha^\NN:\R^{a_0}\rightarrow\R^{a_L}$ associated with the proposed neural network as
\begin{equation} \label{eq:nn_formula}
  \begin{aligned}
  &x_0=\boldsymbol{x}, \\
 &x_\ell = \rho(A_\ell x_{\ell-1} + b_\ell), \hspace{1.cm} \ell = 1,...,L-1, \\
 &\alpha^\NN(\boldsymbol{x}) = A_{L} x_{L-1} + b_L.
  \end{aligned}
\end{equation} 

The input of the neural network is a vector containing the volume fractions in a stencil and the normalized velocity in a specific direction, whereas its output represents the approximation of the flux in the chosen direction. Therefore, the input dimension $a_0$ and the output dimension $a_L$ have to be $a_0 = m^3 + 1$ and $a_L=1$.

Moreover, the nonlinear function $\rho$, also named activation function \cite{dubey2022activation}, can be arbitrarily chosen. Some common choices are $\rho(x)=\text{ReLU}(x)=\max(0,x)$, $\rho(x)=\text{RePU}(x)=\max\{0,x^p\}$ for $1<p\in\N$, $\rho(x)=1/(1+e^{-x})$, $\rho(x)=\tanh(x)$ and $\rho(x)=\log(1+e^x)$. In this work, we only consider the Rectified Linear Unit ($\rho(x)=\text{ReLU}(x)=\max(0,x)$) because many FV flux limiters can be exactly expressed by using such a function. For example, the operator
\begin{equation*}
\text{minmod}(a,b) = \left\{
\begin{aligned}
&0 && ab\le 0\\
\min(|a|, |b|)\, \text{sign}(&a) && ab > 0
\end{aligned}
\right.
\end{equation*}
can be expressed as $\text{minmod}(a, b) = R(a - R(a - b)) - R(-a -R(b-a))$, when $R$ is the ReLU function.
The entries of all matrices and vectors in \eqref{eq:nn_formula} are named trainable weights of the neural network. The training of the neural network is the optimization procedure through which such weights are optimized to minimize a suitable cost function, named loss function.

\subsection{Dataset generation}\label{sec:dataset}

In this section, we describe the way in which we generate the training dataset. It consists of numerous input-output pairs, where each data pair is associated with a suitable distribution of the fluids A and B on a single stencil.

\begin{itemize}
\item
The input is a vector of size $m^3+1$. It contains the $m^3$ volume fractions associated with the given fluids distribution and the local Courant number in the positive $x$-direction.
The  local Courant number $\beta$ is sampled  in an interval $(C_{\rm min},C_{\rm max})\in [0,1]$.
\item
The output is the numerical flux in the   $x$-direction for the model equation (\ref{eq:prob}).
\end{itemize}

Every input-output pair is obtained by purely geometrical considerations. 
In most papers combining neural networks and VOF schemes
\cite{weymouth2010conservative,kawano2016simple,cahaly2024plic}, the training dataset is generated using regular shapes or functions. Here, instead, we are interested in training a neural network that can accurately predict the flux in the presence of both smooth and non-smooth interfaces, because in many applications both cases are present in the same simulation, in different regions of the domain.
A generic case with more fluids can be solved by coupling 2-fluids sub-problems as discussed in \cite{ancellin2023extension}.

A  generic 2D example is in Figure \ref{fig:vof_idea}, where we subdivide the reference stencil in two regions A and B.
However, in 3D the situation is more complicated as sketched in the next  Figures.
In this work, we consider the following synthetic distributions, where the region A is:
\begin{itemize}
\item either the intersection between a  cube and one, two, or three parametric half-spaces,
\item or the intersection between a cube and the interior of a parametric ellipsoid.
\end{itemize}
All technical formulas are provided in Appendix \ref{sec:params}.

In Figure  \ref{fig:1planes} we show examples of a region cut by one plane,
in Figure  \ref{fig:2planes} we show examples of a region cut by two planes,
in Figure  \ref{fig:3planes} we show examples of a region cut by three planes, and, in Figure \ref{fig:ellips}, we show examples of portions of the surface of the cutting ellipsoids, discretized by a finite number of points.

\begin{figure}[!ht]
\centering
\begin{subfigure}{0.24\textwidth}
\includegraphics[width=1\textwidth, clip]{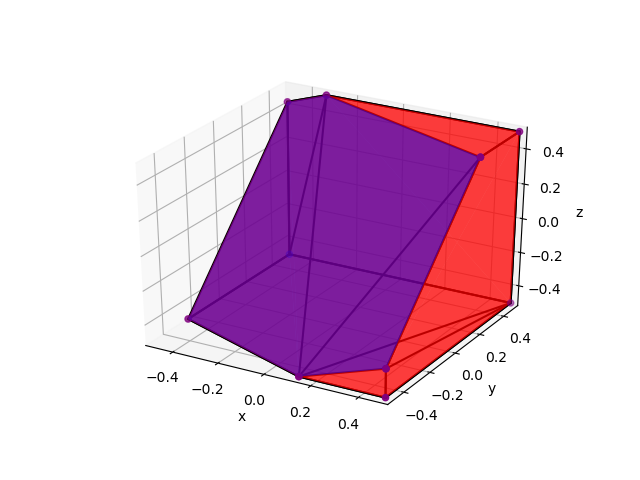}
\end{subfigure}
\begin{subfigure}{0.24\textwidth}
\includegraphics[width=1\textwidth, clip]{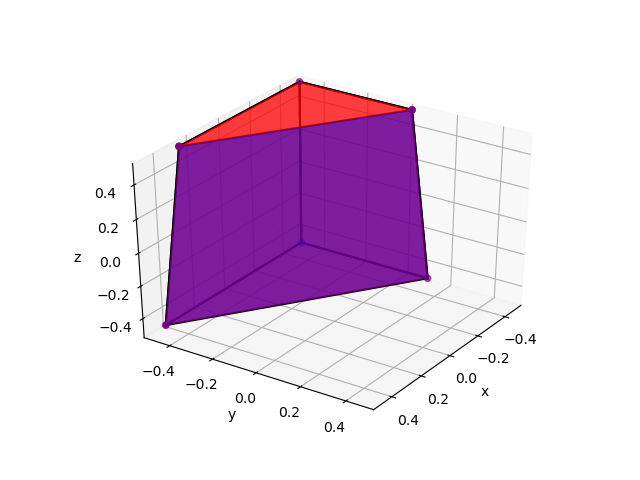}
\end{subfigure}
\begin{subfigure}{0.24\textwidth}
\includegraphics[width=1\textwidth, clip]{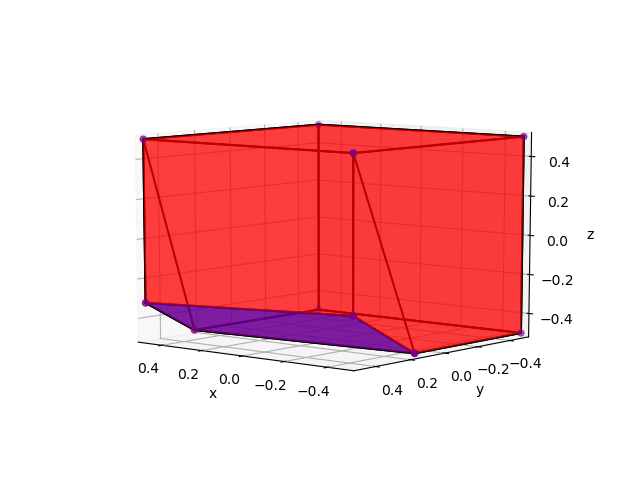}
\end{subfigure}
\begin{subfigure}{0.24\textwidth}
\includegraphics[width=1\textwidth, clip]{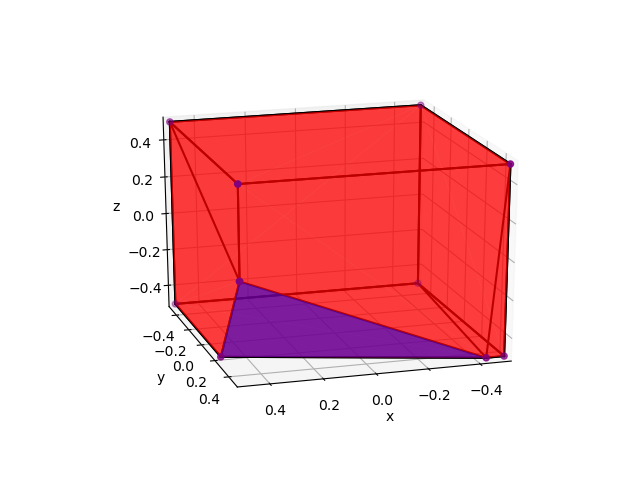}
\end{subfigure}
\caption{Examples of regions defined by a single half-space.}
\label{fig:1planes}
\end{figure}

\begin{figure}[!ht]
\centering
\begin{subfigure}{0.24\textwidth}
\includegraphics[width=1\textwidth, clip]{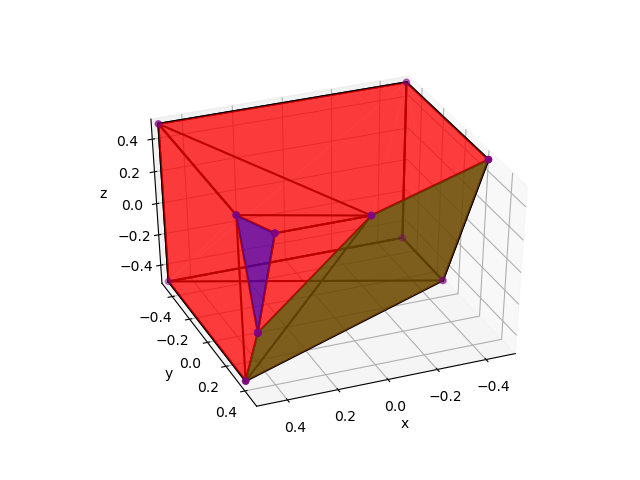}
\end{subfigure}
\begin{subfigure}{0.24\textwidth}
\includegraphics[width=1\textwidth, clip]{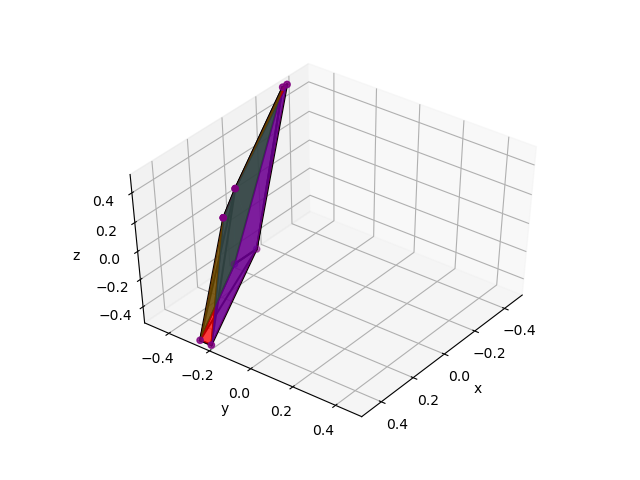}
\end{subfigure}
\begin{subfigure}{0.24\textwidth}
\includegraphics[width=1\textwidth, clip]{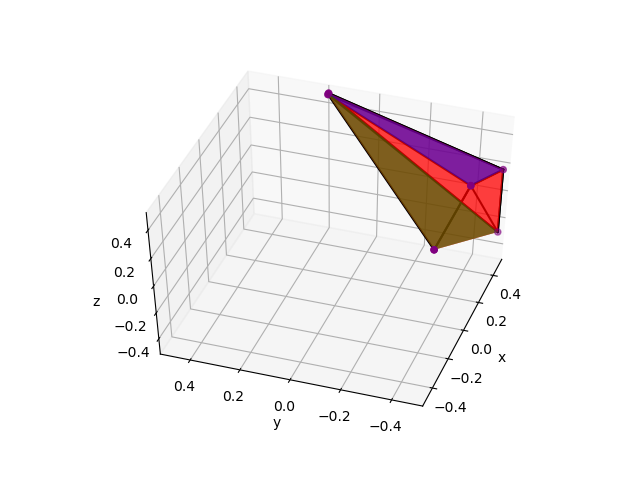}
\end{subfigure}
\begin{subfigure}{0.24\textwidth}
\includegraphics[width=1\textwidth, clip]{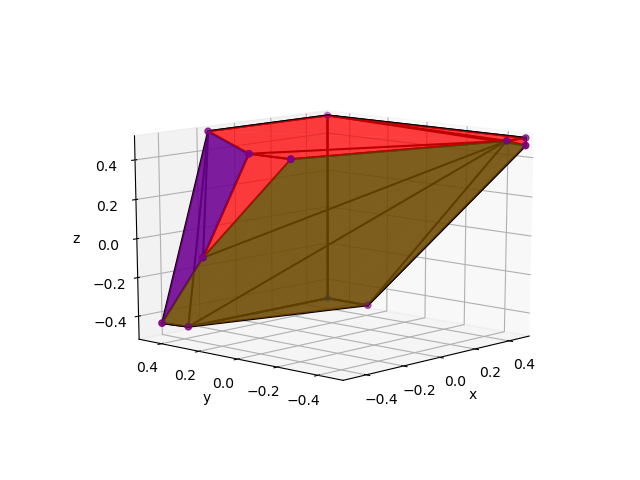}
\end{subfigure}
\caption{Examples of regions defined by two half-spaces.}
\label{fig:2planes}
\end{figure}

\begin{figure}[!ht]
\centering
\begin{subfigure}{0.24\textwidth}
\includegraphics[width=1\textwidth, clip]{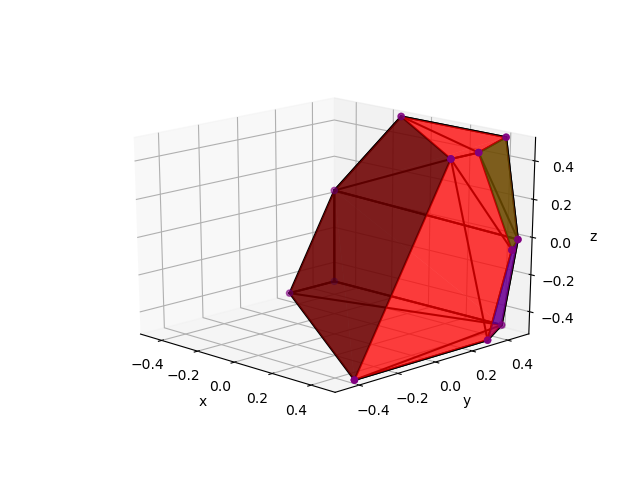}
\end{subfigure}
\begin{subfigure}{0.24\textwidth}
\includegraphics[width=1\textwidth, clip]{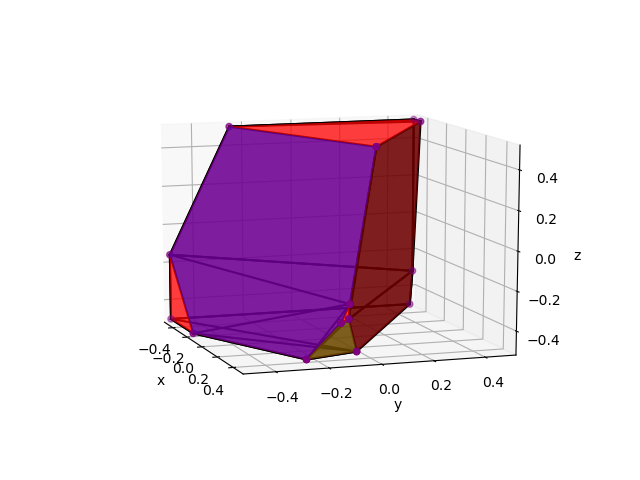}
\end{subfigure}
\begin{subfigure}{0.24\textwidth}
\includegraphics[width=1\textwidth, clip]{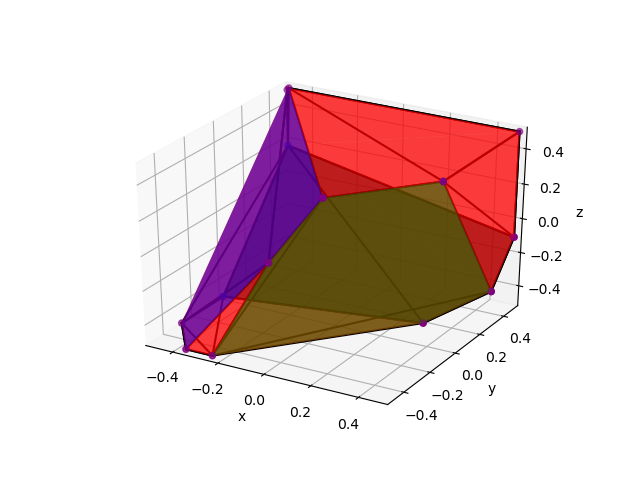}
\end{subfigure}
\begin{subfigure}{0.24\textwidth}
\includegraphics[width=1\textwidth, clip]{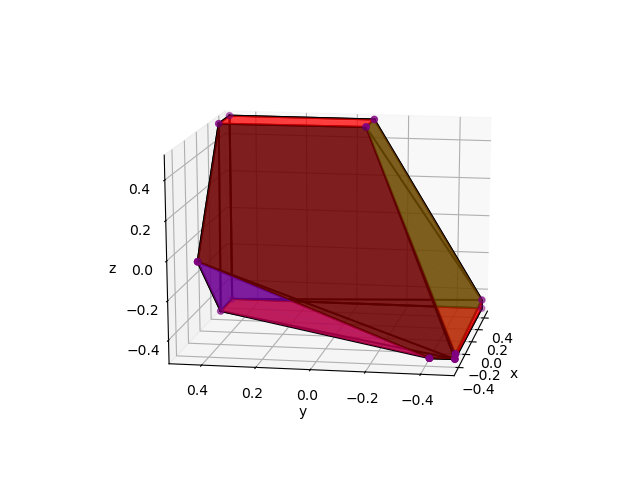}
\end{subfigure}
\caption{Examples of regions defined by three half-spaces.}
\label{fig:3planes}
\end{figure}

\begin{figure}[!ht]
\centering
\begin{subfigure}{0.24\textwidth}
\includegraphics[width=1\textwidth, clip]{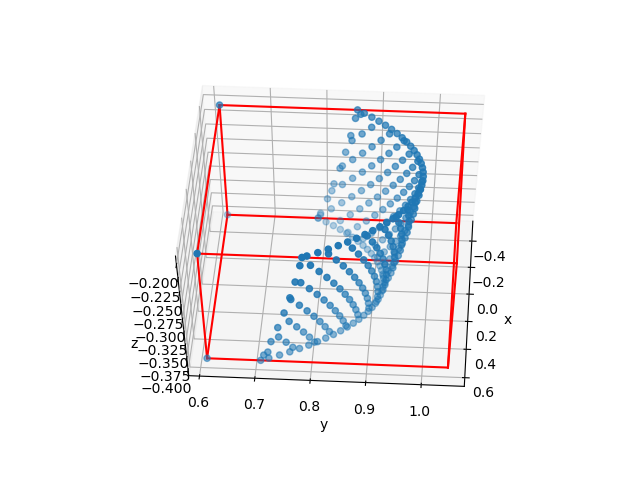}
\end{subfigure}
\begin{subfigure}{0.24\textwidth}
\includegraphics[width=1\textwidth, clip]{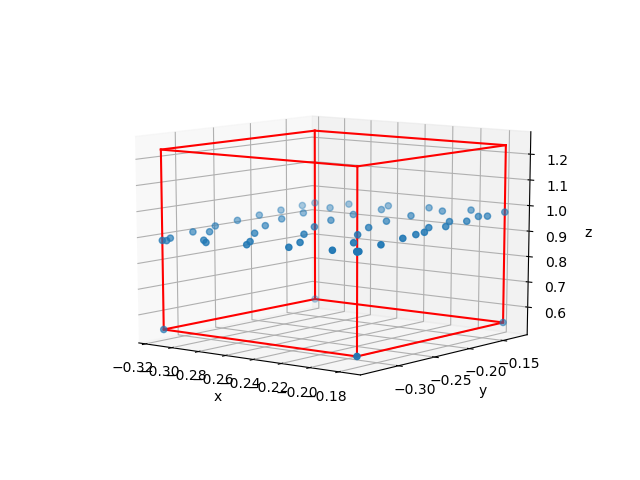}
\end{subfigure}
\begin{subfigure}{0.24\textwidth}
\includegraphics[width=1\textwidth, clip]{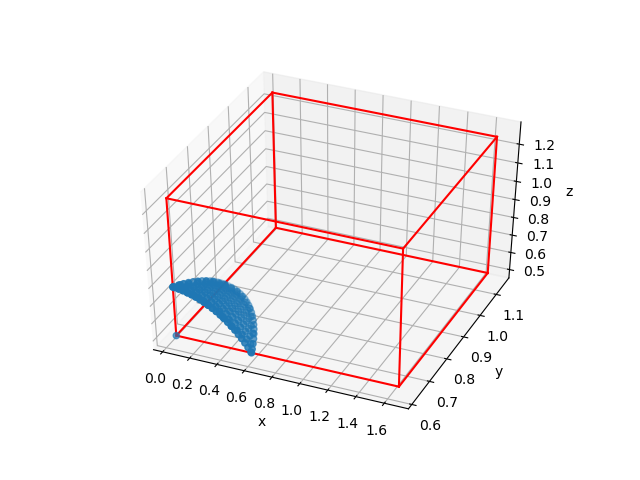}
\end{subfigure}
\begin{subfigure}{0.24\textwidth}
\includegraphics[width=1\textwidth, clip]{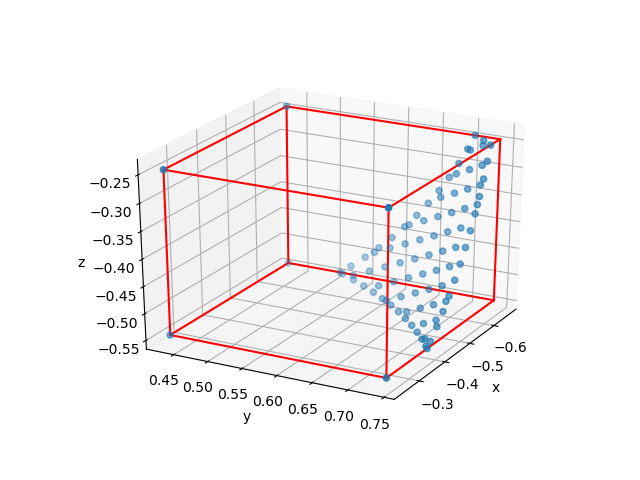}
\end{subfigure}
\caption{Examples of smooth regions cut by an ellipsoid generated as in Figure \ref{fig:fibonacci}.}
\label{fig:ellips}
\end{figure}

These regions are convex, but concave regions can be obtained by switching the roles of the two fluids.
Note that this is a continuous set of geometrical configurations, in the sense that the intersection of two or three half-spaces and the interior part of an ellipsoid can tend to a single half-space, which is the simplest considered case. Considering in our dataset the intersection between two or three half-spaces allows us to train the neural network on less regular distributions. We hope it results in
improving the prediction capabilities for specific applications, but without changing the nature of the VOF-ML approach.
\begin{remark}
The results of our numerical tests show  that this ensemble of configurations is sufficient to reach good prediction capabilities of our VOFML scheme in 3D.
\end{remark}

For each cell of the stencil, we have to compute the volume fraction associated with one of the two regions, say region  A. Performing such a computation by numerical quadrature would be very expensive or inaccurate, due to the discontinuous nature of the target function. Instead,  when considering one or more half-spaces, we explicitly construct the polytopes defined by the intersection between the region A and each cell of the stencil and we exactly calculate the volumes.
In the case of an ellipsoid, we start by approximating it with a polytope with $N_\text{pts-el}=10000$ vertices (generated as in Remark \ref{rem:fibonacci}),  then we construct the polytopes defined by the intersection of the approximated ellipsoid and each cell. This way, it is always possible to use known and efficient algorithms to evaluate the volume of a convex hull \cite{barber1996quickhull} and obtain the exact volume fractions in the first case, and a good approximation in the second one.

\begin{remark}[Discretization of the ellipsoid surface]\label{rem:fibonacci}
In this work, all the integrals involved in the dataset construction are computed through the convex hull algorithm. It is therefore necessary to discretize each ellipsoid with a polytope, which has to be an accurate approximation of the ellipsoid to avoid introducing large errors in the training dataset.

We construct the polytope with $N_\text{pts-el}$ vertices using the Fibonacci spiral \cite{keinert2015spherical}. To define such a set of points $\{x_i, y_i, z_i\}_{i=0}^{N_\text{pts-el}-1}$, let us consider the corresponding spherical coordinates $\{\phi_i, \theta_i\}_{i=0}^{N_\text{pts-el}-1}$. Denoting by $G_R$ the golden ratio $G_R=(\sqrt{5}+1)/2$, $\phi_i$ and $\theta_i$ can be computed as:
\[
\phi_i = 2\frac{  \pi i}{G_R}, \hspace{1cm} \theta_i = \cos^{-1}\left(1 - \frac{2 i + 1}{ n}\right),\hspace{1cm} i=0,\dots,N_\text{pts-el}-1.
\]
Then, the Cartesian coordinates $x_i$, $y_i$ and $z_i$ can be retrieved as:
\[
[x_i, y_i, z_i] = [\cos(\phi)\sin(\theta), \sin(\phi)\sin(\theta), \cos(\theta)],\hspace{1cm} i=0,\dots,N_\text{pts-el}-1.
\]
The obtained distribution is then mapped to the surface of the ellipsoid through a simple affine transformation. An example of the obtained distribution on the unit sphere, with $N_\text{pts-el}=1500$, is shown in Figure \ref{fig:fibonacci}. Note that the points have a quasi-uniform distribution on the entire surface.

\begin{figure}[!ht]
\centering
\includegraphics[width=0.35\columnwidth,keepaspectratio,clip]{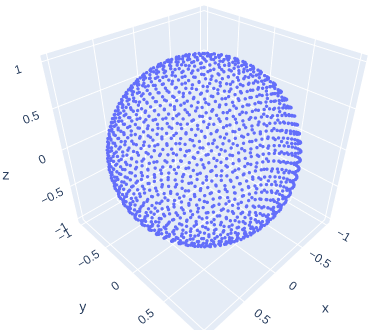}
\caption{Fibonacci spiral with $N_\text{pts-el}=1500$.}
\label{fig:fibonacci}
\end{figure}
\end{remark}

Due to the high dimension of the corresponding parameter spaces (see Appendix \ref{sec:params}), we sample the training distributions using a Latin Hypercube Sampling technique in order to weaken the effect of the curse of dimensionality.
At the end of this stage, one has designed the finite dataset of inputs
\begin{equation} \label{eq:dodo}
\mathcal D_{\rm inp}= \left\{
(\boldsymbol{x},\beta)\in
\mathbb R^{m^3} \times \mathbb R
\right\}, \qquad \# \mathcal D_{\rm inp}<\infty.
\end{equation}
The exact numerical flux is represented as a geometrical function $\alpha$ calculated as the volume fractions, i.e. using the convex hull algorithm
$$
\alpha : \mathbb R^{m^3+1} \times \mathbb R \to \mathbb R.
$$
The dataset of outputs is
$$
\mathcal D_{\rm out}=\left\{
\gamma\in
\mathbb R \mid
\gamma= \alpha(\boldsymbol{x},\beta)
\mbox{ for }(\boldsymbol{x},\beta)\in \mathcal D_{\rm inp}
\right\}
\subset \mathbb R.
$$
The total or training dataset is
$
\mathcal D=
\left\{
(\boldsymbol{x},\beta,\gamma)\in
\mathbb R^{m^3+2} \mid
(\boldsymbol{x},\beta)\in \mathcal D_{\rm inp} , \ \gamma= \alpha(\boldsymbol{x},\beta)
\right\}$.

\subsection{Training}\label{sec:training}

After the generation of the training dataset $\cal D$, a large set of $\#{\cal D}$ input-output pairs is available, and a supervised training procedure is performed. Let us consider the neural network described in Section \ref{sec:network} and fix the architecture free hyper-parameters $L, a_1, a_2, \dots, a_{L-1}$. 
The objective of training is to find a set of weights of the neural network such that $\alpha^\NN(\xx,\beta) \approx \alpha(\xx,\beta) $ for any possible pair $(\boldsymbol{x},\beta) \in \mathcal D$.

To do so, we construct a loss function that penalizes large differences between $\alpha^\NN$ and $\alpha$. 
We consider the standard Mean Squared Error (MSE) loss function $\cal L$ defined as:
\begin{equation}\label{eq:loss}
{\cal L} =\frac{1}{\#{\cal D_{\rm inp} }} \sum_{(\boldsymbol{x},\beta)\in{\cal D_{\rm inp} }} \left( \alpha^\NN(\xx,\beta) - \alpha(\xx,\beta)\right)^2.
\end{equation}
Note that, given a neural network architecture, the loss function $\cal L$ depends only on the dataset $\cal D$ and on the neural network's trainable weights. In particular, the neural network and its training do not depend on the PDE that has to be solved (in this case equation \eqref{eq:prob}) or on any previously computed expensive simulation. All our training data are synthetic.

Because of the intrinsic nonlinearity of the neural network representation,
the cost function  $\cal L$ is highly non-convex, and it is not possible to find global minima of such a cost function. Therefore, we content ourselves with finding local minima through an iterative descent algorithm. Numerous algorithms are available, in this work we use a combination of the ADAM optimizer \cite{kingma2014adam} and the BFGS optimizer \cite{wright1999numerical}.
At the end of the training, we have a function $\alpha^\NN$ that approximately represents the relation between the input vector  and the target
for all pair  $(\boldsymbol{x}, \beta)\in{\cal D}$, that is
\begin{equation} \label{eq:didi}
\alpha^\NN(\xx,\beta) \approx \alpha(\xx,\beta)  \mbox{ for all }(\boldsymbol{x},\beta) \in \mathcal D_{\rm inp}
.
\end{equation}

\subsection{Definition of the 3D VOFML scheme}\label{sec:nn_in_FV}

Let us consider Problem \eqref{eq:prob} on the domain $\Omega$, discretized by the mesh ${\cal T}_h$ with mesh size $\Delta x$. Let $\Delta t$ be the time step size. The initial condition is evolved in time one time step at a time until the entire time interval $[0,T]$ is covered.

Since we adopt a FV scheme, the volume fraction 
$\alpha_{i,j,k}$ at a given time $t$ is updated at the time $t+\Delta t$ as  $\overline{ \alpha_{i,j,k}}$ using the generic formula 
\begin{equation}\label{eq:fv_step}
\overline{ \alpha_{i,j,k}}= \alpha_{i,j,k} -  \sum_{f\in \mathcal F( C_{i,j,k})} \widetilde g(f,n)
\end{equation}
where the sum is extended over  the set of faces  $\mathcal F( C_{i,j,k})$ 
$$
\mathcal F( C_{i,j,k})=
F_{i,j,k}^x\bigcup F_{i,j,k}^y\bigcup F_{i,j,k}^z,
\quad
F_{i,j,k}^x=\left\{
f_{i\pm \frac12, j,k}
\right\}, \ F_{i,j,k}^y=\left\{
f_{i,j\pm \frac12,k}
\right\}, \
F_{i,j,k}^z=\left\{
f_{i,j,k\pm \frac12}
\right\}
,
$$
and $\widetilde g(f,n)$ is the signed flux across the face $f$ in the direction normal $n$ in the time interval $( t, t+\Delta t)$, and  $f_{i\pm \frac12, j,k}$, $f_{i, j\pm\frac12,k}$ or $ f_{i, j,k\pm\frac12}$
refer to the faces of the structured mesh. 
In practice, we use a directional splitting scheme to reduce the   3-dimensional problem (\ref{eq:fv_step}) to a triplet of 1-dimensional problems.
For the simplicity of the presentation only, we assume that the velocity field is constant with non-negative components in all directions
$$
u=(u_x, u_y, u_z) \qquad \mbox{ with } u_x, \ u_y, \ u_z \geq 0
$$
Then the splitting method is made of the following  3 steps
\begin{equation}\label{eq:fv_split}
\begin{aligned}
\alpha_{i,j,k}^{x} &= \alpha_{i,j,k}  - 
u_x \frac{    \Delta t }{ \Delta x}
\left(\alpha_{{i+ \frac12,j,k}} - \alpha_{{i- \frac12 ,j,k}} \right) , \\
\alpha_{i,j,k}^{y} &= \alpha_{i,j,k}^{x} - u_y  \frac{    \Delta t }{ \Delta x}
\left(
\alpha_{{i,j+ \frac12, k}} - \alpha_{{i,j- \frac12, k}}
\right) , \\
\overline {\alpha_{i,j,k} }&= \alpha_{i,j,k}^{y} - u_z  \frac{    \Delta t }{ \Delta x}
\left( 
\alpha_{{i, j, k+ \frac12}} - \alpha_{{i, j, k-  \frac12 }}
\right) .\\
\end{aligned}
\end{equation}
For stability reasons, we assume a natural CFL constraint $\max(u_x, u_y, u_z) \frac{ \Delta t} {\Delta x}\leq 1$.

The fluxes $\alpha_{{i+  \frac12,j,k}} $ and $ \alpha_{{i- \frac12 ,j,k}} $
are the fluxes going outside and inside on the two faces in $F_{i,j,k}^x$.
Similar reasonings hold for the other directions $y$ and $z$. The iterations of equation \eqref{eq:fv_split} allows to simulate the entire flow from time $t=0$ to the final time $t=T$.

\begin{definition}[Flux of the  VOFML scheme]
The originality of the VOFML scheme in 3D relies on the following choice of the numerical flux, which is hereafter given for the flux in the $x$ direction
for all $i,j,k$
\begin{equation} \label{eq:dudu}
\alpha_{{i+ \frac12,j,k}} = \alpha^{\mathcal N \mathcal N}(\mathbf x,\beta_x)
\end{equation}
where
\begin{itemize}
\item the volume fractions are taken locally 
$
\mathbf x =
\left(\alpha_{i', j',k'} \right)_{(i',j',k')\in S_{i,j,k}^m}$,
\item
$\beta_x= u_x \frac{    \Delta t }{ \Delta x}\geq 0$ is the Courant number in the $x$ direction,
\item  $\alpha^{\mathcal N \mathcal N}$ is the function trained in (\ref{eq:didi}) for the dataset $\mathcal D$ (\ref{eq:dodo}).
\end{itemize}
The same definition holds with evident modification in the $y$ and $z$ directions. If some components of the velocity are negative,  the stencil $ S_{i,j,k}^m $ is upwinded accordingly, and the Courant number is kept non-negative.
\end{definition}

Note that \eqref{eq:fv_split} is the general update rule used in many VOF schemes, the only difference is in the way the fluxes are computed.
In more classical VOF schemes such as \cite{hirt1981volume,fakhreddine2024directionally,pilliod2004second,weymouth2010conservative,dyadechko2008reconstruction,kawano2016simple,pathak2016three,cahaly2024plic},
the local stencil is used to reconstruct interface parameters, which are post-processed to evaluate the numerical flux.
In this  VOFML scheme the numerical flux is directly computed by evaluating a previously trained neural network.  It saves one step by comparison with the general VOF procedure
which results in an important algorithmic simplification.

\section{Physical constraints}\label{sec:strategies}



The VOFML scheme defined in the previous Section  is a purely data-driven approach  which does not take into account the natural symmetries  attached to the model
transport equation \ref{eq:prob}.
This is a common problem for most data driven techniques used in combination with numerical solvers.
It is therefore advisable to enforce additional physical constraints or symmetries in the network or in its training, as described in \cite{garanger,mimi,despres2020machine} in a different context.
This enforcement can be performed {\it a priori} or {\it a posteriori}.
What we call an  {\it a priori} approach in the context of ML schemes is a ML architecture which naturally provides a quantity satisfying the symmetries.
What we call an {\it a posteriori} approach is based on a post-processing.
In this Section, we propose and analyze a few possible  {\it a priori} strategies to increase the symmetry properties of the VOF-ML scheme.
We pay particular attention to the symmetry group of a cubic structure in 3D. An {\it a posteriori} strategy to avoid creation of negative mass is also proposed in Section \ref{sec:non-neg}.

\subsection{3D symmetries}\label{sec:symms}

Several natural symmetries exist on Cartesian meshes.
An important fact  \cite{lomont2014applications} is that a  cube has a symmetry group of order 48 (the octahedral group),
that is is left invariant by  47 non-trivial symmetries/permutations generically denoted as $\sigma$  (the trivial symmetry is the identity).
The octohedral group is isomorphic to $ S_4\times \mathbb Z_2$, that is once again $24\times 2$ elements.
It is clearly
more symmetries than in 2D.

It explains that some care is needed to implement a symmetry-preserving method in our 3D neural network approach.
In the following, we provide an arbitrary list
of basic symmetries that are actually sub-groups of cubic symmetries.
The notation "(1+$c$)" refers to the fact that the sub-group is made of $1+c$ elements, where the first one is the identity and the other ones are non-trivial.
We consider the following basic transformations, which are ordered with respect to one particular direction,   arbitrarily chosen to the $x$-direction.
\begin{figure}

\begin{subfigure}{0.49\textwidth}
\centering
\includegraphics[width=0.7\textwidth, clip]{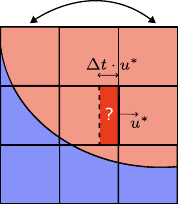}
\caption{$T_2$: the flux does not coincide.}
\label{fig:vof_idea_flux-1}
\end{subfigure}
\begin{subfigure}{0.49\textwidth}
\centering

\vspace{0.85cm}

\includegraphics[width=0.8\textwidth, clip]{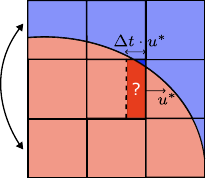}
\caption{$T_4$: the flux  coincides. }
\label{fig:vof_idea_flux-2}
\end{subfigure}

%
\caption{Comparison with the flux of  Figure \ref{fig:vof_idea} (on a 2D configuration).}
\label{fig:vof_idea_symms}
\end{figure}

\begin{itemize}
\item $T_1$: rotations leading to a reordering of the axes: the first axis, i.e. the one in the direction of the flux, can be the $x$-axis, the $y$-axis or the $z$-axis (1+2);
\item $T_2$: reflection with respect to the plane orthogonal to the first axis (1+1);
\item $T_3$: rotations of multiples of $\pi/2$ around the first axis (1+3);
\item $T_4$: reflection with respect to the plane orthogonal to the second axis (1+1).
\end{itemize}
All 48 symmetries of a cube can be obtained by combining these basic transformations. For example, a reflection around the third axis can be obtained by applying the transformations $T_3$ (with $\pi$ as the rotation angle) and
$T_4$. Note that each of the considered transformations maps the region occupied by the stencil to itself but permutes the cells composing it. Therefore, since in a VOF scheme one is only interested in the volume fractions, we always work with the corresponding permutations of the volume fractions, and we do not apply the geometrical transformations to the cube associated with the stencil.

Let us now consider an arbitrary distribution of two fluids inside a cubic stencil, and let us focus on the flux in the $x$-direction
for a  constant velocity of the form $(v,0,0)$.
Applying any transformation of type $T_1$ or $T_2$, then the  flux may vary as illustrated on the left of  Figure \ref{fig:vof_idea_symms}.
On the other hand, the flux remains the same whenever any transformation of type $T_3$ or $T_4$ is applied, as illustrated on the left of  Figure \ref{fig:vof_idea_symms}.
For this reason, we distinguish  two sub-groups of the symmetry group
$$
\left.
\begin{array}{ll}
\cal S_{\neq}= \left\{ \mbox{symmetries obtained by applying  transformations  $T_1$ and $T_2$} \right\},
&
\mbox{(exact flux may vary)},\\
\cal S_{=}= \left\{ \mbox{symmetries obtained by applying  transformations  $T_3$ and $T_4$} \right\},
&
\mbox{(exact flux is the same)}.
\end{array}
\right.
$$
Due to the different effects of these sets of transformations on the flux, we use the set $\cal S_=$ to impose physical properties to the neural network (see Section \ref{sec:constraints}) and the set $\cal S_{\neq}$ to cheaply enrich the training dataset.
In particular, note that to generate the triplet $(\xx,\beta,\gamma)\in{\mathcal D}$ the computation of $m^3+1$ integrals is required (the $m^3$ volume fractions included in the vector $\xx$ and the flux $\gamma = \alpha(\xx,\beta)$). On the other hand, given the untransformed triplet $(\xx,\beta,\gamma)\in{\mathcal D}$ and for any $\sigma\in{\cal S_{\neq}}$, the new triplet $(\sigma\xx,\beta,\gamma_\sigma)$ can be more cheaply computed by permuting the entries of the vector $\xx$ according to the transformation $\sigma$ and by computing just the integral $\gamma_\sigma= \alpha(\sigma\xx,\beta)$. We thus consider the following enriched input dataset:

\begin{equation} \label{eq:dodo2}
\mathcal D_{\rm inp}^\sigma= \left\{
(\sigma\boldsymbol{x},\beta)\in
\mathbb R^{m^3} \times \mathbb R, \,\sigma\in{\cal S_{\neq}}
\right\}, \qquad \# \mathcal D_{\rm inp}^\sigma<\infty,
\end{equation}
and the corresponding total dataset
\[
\mathcal D^\sigma=
\left\{
(\boldsymbol{x},\beta,\gamma)\in
\mathbb R^{m^3+2} \mid
(\boldsymbol{x},\beta)\in \mathcal D_{\rm inp}^\sigma , \ \gamma= \alpha(\boldsymbol{x},\beta)
\right\}.
\]

Note that $\mathcal D_{\rm inp} \subset\mathcal D_{\rm inp}^\sigma$ and $\mathcal D \subset\mathcal D^\sigma$ because the identity belongs to  ${\cal S_{\neq}}$. In the following, for the sake of simplicity, we drop the superscript $\sigma$ and we always refer to $\mathcal D_{\rm inp}^\sigma$ and $\mathcal D^\sigma$ as $\mathcal D_{\rm inp}$ and $\mathcal D$ because the method description is not influenced by the particular choice of the training dataset.

\subsubsection{Enforcing  symmetry constraints}\label{sec:constraints}

Let us consider a set of volume fractions in a stencil and a given velocity $u^*$ in its central cell. As discussed in Section \ref{sec:symms}, if a transformation in $\cal S_=$ is applied to the stencil, then the exact flux in the $x$-direction does not vary. As a consequence, the flux computed with any VOF scheme should not change with respect to the flux computed on the original stencil. However, even with a perfectly devised training, the neural network would not satisfy such an important physical constraint.
Therefore, we need to add such a constraint in our neural network architecture to obtain networks that satisfy it
by construction.
We thus want to modify the flux computation such that, if a transformation in $\cal S_=$ is applied to the stencil, the output of the neural network does not vary. 
There are 8 different  permutations that preserve the flux
$$
\mathcal S_{=}=\left\{
\sigma_1, \sigma_2, \dots, \sigma_8
\right\}.
$$
Given a
neural network $ \alpha^\NN$ as in (\ref{eq:didi}), we define a new neural network  $\widehat \alpha^\NN$ by
\begin{equation}\label{eq:net_perm}
\widehat \alpha^\NN(\mathbf x, \beta) = \frac 18 \sum_{j=1}^8 \alpha^\NN(\sigma_j \mathbf x, \beta)
\mbox{ for all }(\mathbf x, \beta)\in \mathbb R^{m^3} \times \mathbb R .
\end{equation}
This operation is performed by a linear combination, so it is an admissible neural network.
The cost of evaluating $\widehat \alpha^\NN$ is a priori 8 times the cost of evaluating $\alpha^\NN$,
in practice less because some saving is possible with vectorization of the calculations.


\begin{lemma}
\label{lem:lem1}
The new neural network preserves symmetries in $ \mathcal S_{=}$, that is
$$
\widehat \alpha^\NN(\sigma_j \mathbf x, \beta)= \widehat \alpha^\NN(\mathbf x, \beta)
\mbox{  for all } (\mathbf x, \beta)\in \mathbb R^{m^3} \times \mathbb R
\mbox{ and all }\sigma_j \in \mathcal S_{=}.
$$
\end{lemma}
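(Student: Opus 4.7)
The plan is a one-line group-theoretic averaging argument. First I would observe that the set $\mathcal{S}_=$ is not just a collection of eight permutations but actually a subgroup of the octahedral group: it is generated by the $\pi/2$ rotations around the first axis ($T_3$, generating a cyclic group of order 4) together with the reflection $T_4$, and these together form the dihedral group $D_4$ of order 8. In particular, $\mathcal{S}_=$ is closed under composition: for every $\sigma_j, \sigma_k \in \mathcal{S}_=$, the product $\sigma_j \sigma_k$ is again one of the eight elements $\sigma_1, \dots, \sigma_8$.

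Next I would simply unfold the definition \eqref{eq:net_perm} on the transformed input. For any fixed $\sigma_k \in \mathcal{S}_=$ and any $(\mathbf{x}, \beta) \in \mathbb{R}^{m^3} \times \mathbb{R}$,
\begin{equation*}
\widehat{\alpha}^\NN(\sigma_k \mathbf{x}, \beta) \;=\; \frac{1}{8}\sum_{j=1}^{8} \alpha^\NN(\sigma_j \sigma_k \mathbf{x}, \beta).
\end{equation*}
Because left-multiplication by $\sigma_k$ is a bijection of $\mathcal{S}_=$ onto itself (this is the only nontrivial fact, and it follows from the group property above), the map $j \mapsto j'$ defined by $\sigma_{j'} = \sigma_j \sigma_k$ is a permutation of $\{1, \dots, 8\}$. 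Reindexing the sum by $j'$ yields
\begin{equation*}
\widehat{\alpha}^\NN(\sigma_k \mathbf{x}, \beta) \;=\; \frac{1}{8}\sum_{j'=1}^{8} \alpha^\NN(\sigma_{j'} \mathbf{x}, \beta) \;=\; \widehat{\alpha}^\NN(\mathbf{x}, \beta),
\end{equation*}
which is the desired invariance.

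The only real subtlety — and the step I would state carefully rather than skip — is the closure/bijection claim. One could either invoke it from the fact that the subgroup generated by $T_3$ and $T_4$ has exactly eight elements and must therefore coincide with $\mathcal{S}_=$, or verify it directly by listing the $D_4$ multiplication table on the eight permutations of the stencil. Everything else is mechanical reindexing; no property of the trained network $\alpha^\NN$ is used beyond the fact that the averaging in \eqref{eq:net_perm} is over the full orbit of the input under the group $\mathcal{S}_=$.
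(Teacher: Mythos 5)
Your argument is correct and is essentially identical to the paper's proof: both rely on the group structure of $\mathcal{S}_=$ to note that $\sigma_k \mapsto \sigma_k\sigma_j$ permutes the eight elements, then reindex the average. Your additional identification of $\mathcal{S}_=$ with the dihedral group $D_4$ generated by $T_3$ and $T_4$ is a harmless elaboration of the closure fact the paper simply asserts.
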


\begin{proof}
This is due to the group structure of $\mathcal S_{=}$. Indeed
$$
\widehat \alpha^\NN(\sigma_j \mathbf x, \beta)= \frac 18 \sum_{k=1}^8 \alpha^\NN(\sigma_k \sigma _j  \mathbf x, \beta)=
\frac 18 \sum_{l=1}^8 \alpha^\NN(\sigma_l \mathbf x, \beta)
$$
because, for any $j=1,\dots, 8$, the index $l$ in $\sigma_l=\sigma_k \sigma _j $ covers $l=1,\dots, 8$ as $k=1,\dots, 8$.
\end{proof}
%
%

The same post-processing technique can be used to enforce additional symmetries.
For example, we can consider that the numbering of the fluid of interest leads to an additional physical symmetry.
If the flux of the fluid A is $f_A$, then the flux of the fluid B is $f_B=1-f_A$.
It can be represented with the operator
$
M:\R^{m^3}\rightarrow \R^{m^3}
$ be such that
\begin{equation*}
M(\xx)=(
1 - x_1, \dots, 1-x_{m^3})
\mbox{ for all }\mathbf x=(x_1, \dots, x_{m^3})\in \R^{m^3}.
\end{equation*}
When the roles of the fluids are switched (i.e. when $M$ is applied to the input vector), the output $ \alpha^\NN(M\xx)$ of the network should coincide with $1-\alpha^\NN(\xx)$.
Since it is not the case for a generic neural network such as (\ref{eq:didi}),
we define the  new neural network $\overline \alpha^\NN$
\begin{equation}\label{eq:net_switch}
\overline \alpha^\NN(\xx, \beta) = \frac 12 \alpha^\NN(\xx, \beta) + \frac12\left(1- \alpha^\NN(M\xx, \beta)\right)
\mbox{ for all } (\mathbf x, \beta)\in\mathbb R^{m^3} \times \mathbb R .
\end{equation}

\begin{lemma}\label{lem:lem2}
The neural network $\overline \alpha^\NN$  is additive in the sense
$$
\overline \alpha^\NN(\mathbf x, \beta)+\overline \alpha^\NN(M \mathbf x, \beta)=1 \mbox{ for all } (\mathbf x, \beta)\in \mathbb R^{m^3} \times \mathbb R .
$$
\end{lemma}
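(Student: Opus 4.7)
The plan is a direct computation resting on one structural fact, so I expect no genuine obstacle, only a cancellation check.

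The first step is to observe that the switching operator $M$ is an involution: for any $\mathbf x=(x_1,\dots,x_{m^3})$, $M(M\mathbf x)$ has $i$-th component $1-(1-x_i)=x_i$, hence $M\circ M = \mathrm{Id}$ on $\mathbb R^{m^3}$. This is the only algebraic property of $M$ I will use; the neural network $\alpha^{\NN}$ itself is treated as an arbitrary function on $\mathbb R^{m^3}\times\mathbb R$.

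Next, I would write the defining relation (\ref{eq:net_switch}) evaluated at the two arguments $(\mathbf x,\beta)$ and $(M\mathbf x,\beta)$. The first gives $\overline\alpha^{\NN}(\mathbf x,\beta)=\tfrac12\alpha^{\NN}(\mathbf x,\beta)+\tfrac12\bigl(1-\alpha^{\NN}(M\mathbf x,\beta)\bigr)$, while the second, after substitution $\mathbf x\mapsto M\mathbf x$, becomes $\overline\alpha^{\NN}(M\mathbf x,\beta)=\tfrac12\alpha^{\NN}(M\mathbf x,\beta)+\tfrac12\bigl(1-\alpha^{\NN}(M^2\mathbf x,\beta)\bigr)$. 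Using the involution property from the first step, the last term collapses to $\tfrac12\bigl(1-\alpha^{\NN}(\mathbf x,\beta)\bigr)$.

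The final step is simply to sum the two expressions. The $\tfrac12\alpha^{\NN}(\mathbf x,\beta)$ term in the first cancels the $-\tfrac12\alpha^{\NN}(\mathbf x,\beta)$ term in the second; similarly the two $\alpha^{\NN}(M\mathbf x,\beta)$ contributions cancel. What remains is $\tfrac12+\tfrac12=1$, which is the desired identity.

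If anything is worth a remark, it is that the construction (\ref{eq:net_switch}) is precisely the symmetrization associated with the two-element group $\{\mathrm{Id},M\}$ acting by $f\mapsto 1-f\circ M$ on functions, and Lemma \ref{lem:lem2} is the exact analogue of Lemma \ref{lem:lem1}: the averaging over a group orbit forces invariance (resp.\ the affine invariance $f(\mathbf x)+f(M\mathbf x)=1$). I will mention this parallel briefly in the proof, but no estimate or regularity assumption is needed, and the statement holds pointwise on all of $\mathbb R^{m^3}\times\mathbb R$.
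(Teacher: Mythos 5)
Your proof is correct and follows essentially the same route as the paper's: both rest solely on the involution property $M\circ M=\mathrm{Id}$ and a direct expansion of the definition of $\overline\alpha^{\NN}$ at $\mathbf x$ and at $M\mathbf x$, the only cosmetic difference being that the paper rearranges to show $\overline\alpha^{\NN}(M\mathbf x,\beta)=1-\overline\alpha^{\NN}(\mathbf x,\beta)$ while you sum the two expressions directly. Your closing remark on the group-orbit symmetrization parallel with Lemma \ref{lem:lem1} is accurate but not needed for the argument.
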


\begin{proof}
One has
\begin{equation*}
\begin{aligned}
\overline \alpha^\NN(M \xx, \beta) &= \frac 12 \alpha^\NN(M \xx, \beta) + \frac12\left(1- \alpha^\NN(M(M\xx, \beta))\right) \\
&= \frac 12 \alpha^\NN(M \xx, \beta) + \frac12\left(1- \alpha^\NN(\xx, \beta)\right) \\
&= -\frac 12 \alpha^\NN(\xx, \beta) + \frac12\left(1 + \alpha^\NN(M\xx, \beta)\right) \\
&= 1 - \left[\frac 12 \alpha^\NN(\xx, \beta) + \frac12\left(1- \alpha^\NN(M\xx, \beta)\right) \right] = 1 - \overline \alpha^\NN( \xx, \beta),
\end{aligned}
\end{equation*}
because $M\circ M(\mathbf x)=M(M\xx) = \xx$.
\end{proof}

To obtain a neural network satisfying both types of constraints, it is sufficient to combine the two operations by linear combination.
It yields the new neural network
\begin{equation}\label{eq:net_switch}
\widetilde \alpha^\NN(\xx,\beta) = \frac 12\widehat \alpha^\NN(\xx,\beta) + \frac12\left(1- \widehat \alpha^\NN(M\xx,\beta)\right) .
\end{equation}
By construction, the neural network $\widetilde \alpha^\NN$ also satisfies the symmetry constraints of
Lemma
\ref{lem:lem1}
and Lemma \ref{lem:lem2}.

\subsection{Non-negativity constraints}\label{sec:non-neg}

Since we are interested in using the neural network's predictions in \eqref{eq:fv_split} to update the volume fractions for the subsequent time step, we need to enforce the preservation of natural bounds.
In our case, such bounds come from the fact that a volume fraction is necessarily between 0 and 1 since $\alpha$
is an indicator function
$$
0\leq \alpha_C = \dfrac{1}{|C|}\int_C \alpha\, dx\, dy\, dz \leq 1
$$
which shows up at three different places,
where the first two items pose no problem. The real issue is the third one.

\begin{itemize}
\item For all $(\mathbf x, \beta)\in \mathcal D_{\rm inp}$, one has $\mathbf x\in [0,1]^{m^3}$. Note that a CFL number is also between 0 and 1, so that  $\mathcal D_{\rm inp}\subset [0,1]^{m^3+1}$. This constraint is satisfied by construction of the dataset.

\item Similarly, the exact flux is in bounds, that is $\alpha(\mathbf x, \beta)\in [0,1]$ for all $(\mathbf x, \beta)$. This constraint is also satisfied
by construction of the dataset. So one has by construction  $\mathcal D_{\rm out}\subset [0,1]$ and finally  $\mathcal D\subset [0,1]^{m^3+2}$.

\item More technical is the satisfaction of the maximum principle attached to the transport of the indicator function with the transport equation
(\ref{eq:prob}). The practical question is the following. Assume the initial discrete solution of the discrete scheme
(\ref{eq:fv_split}) is in bounds, that is
$\alpha_{i,j,k}\in [0,1]$ for all $(i,j,k)$. Then one must make sure that the final discrete solution is in bounds as well, that is
$\overline \alpha_{i,j,k}\in [0,1]$ for all $(i,j,k)$.
\end{itemize}


Our solution, inspired by
\cite{despres2001contact,despres2020machine}, is
to analyze the last of these three items, starting from the consideration of the first step of the discrete scheme (\ref{eq:fv_split}).
Let us consider a cubic cell with edge length  $\Delta x$ and with volume fraction $\alpha_{i,j,k}\in[0,1]$. The volume of the cell is $\Delta x^3$, the volume occupied by the fluid A is $0\le V_A=\alpha_{i,j,k}\Delta x^3$.
The volume occupied by the complementary fluid B is $0\le V_B=(1-\alpha_{i,j,k})\Delta x^3$.

The first step of the splitting scheme (\ref{eq:fv_split})
is
\begin{equation} \label{eq:cd1}
\alpha_{i,j,k}^{x} = \alpha_{i,j,k}  -  \beta_x
\left(\alpha_{{i+ \frac12,j,k}} - \alpha_{{i- \frac12 ,j,k}} \right), \qquad \beta_x=u_x \frac{    \Delta t }{ \Delta x}\geq 0.
\end{equation}
For convenience, we use a complementary variable  $\mu$ such that $\alpha+\mu=1$ everywhere. That is
$\alpha_{i,j,k} +\mu_{i,j,k} =1$, $\alpha_{{i\pm \frac12,j,k}}+\mu_{{i\pm \frac12,j,k}}=1$ and
$\alpha_{i,j,k}^{x}+\mu_{i,j,k}^{x}=1$.
Since we use a flux which satisfies the additivity condition of Lemma \ref{lem:lem2},
one can  write
\begin{equation} \label{eq:cd2}
\mu_{i,j,k}^{x} = \mu_{i,j,k}  -  \beta_x
\left(\mu_{{i+ \frac12,j,k}} - \mu_{{i- \frac12 ,j,k}} \right), \qquad \beta_x=u_x \frac{    \Delta t }{ \Delta x}\geq 0.
\end{equation}
Therefore the  double inequality $0\leq \alpha_{i,j,k}^{x}  \leq 1$ is  equivalent to
$0\leq \alpha_{i,j,k}^{x} $ and $0\leq \mu_{i,j,k}^{x}$.

\begin{lemma}
Assume the initial data is in bounds $0\leq  \alpha_{i,j,k}\leq 1$ for all $(i,j,k)$.
Assume the fluxes satisfy
$
0\leq \alpha_{i,j,k}^{x}  \leq 1$ for all $(i,j,k)$.
Assume the numerical fluxes satisfy
\begin{equation} \label{eq:nm4}
0\leq  \alpha_{i+\frac12,j,k}   \leq   \frac{1}{\beta_x} \alpha_{i,j,k}^{x}
\mbox{ and }
0\leq 1 -  \alpha_{i+\frac12,j,k}   \leq   \frac{1}{\beta_x} (1-\alpha_{i,j,k}^{x} ) \mbox{ for all }(i,j,k).
\end{equation}
Then the final data is in bounds $
0\leq \overline {\alpha_{i,j,k}}  \leq 1$ for all $(i,j,k)$.
\end{lemma}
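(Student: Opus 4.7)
The plan is to reduce the two-sided bound $0 \leq \overline{\alpha_{i,j,k}} \leq 1$ to two one-sided non-negativity arguments, which can be applied identically to each of the three directional sub-steps of the splitting (\ref{eq:fv_split}). The central observation is that the additivity property of the trained network (Lemma \ref{lem:lem2}) is what produces the companion relation (\ref{eq:cd2}) for the complementary variable $\mu = 1 - \alpha$. Since (\ref{eq:cd2}) has exactly the same algebraic form as (\ref{eq:cd1}), the upper bound $\alpha_{i,j,k}^x \leq 1$ is equivalent to $\mu_{i,j,k}^x \geq 0$, so preserving both bounds on $\alpha$ amounts to proving non-negativity for both $\alpha$ and its complement $\mu$.

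Next I prove $\alpha_{i,j,k}^x \geq 0$ directly, by rewriting (\ref{eq:cd1}) in the ``upwind-balanced'' form
\[
\alpha_{i,j,k}^x = \bigl(\alpha_{i,j,k} - \beta_x\, \alpha_{i+\frac12,j,k}\bigr) + \beta_x\, \alpha_{i-\frac12,j,k}.
\]
The inflow term $\beta_x\, \alpha_{i-\frac12,j,k}$ is non-negative by the left half of (\ref{eq:nm4}), and the parenthetical term is non-negative by the right half of (\ref{eq:nm4}), which physically expresses that the volume of fluid A exiting across the face $i+\frac12$ cannot exceed the volume of fluid A present in the upstream cell. Applying the very same rearrangement to (\ref{eq:cd2}) using the complementary inequalities for $1 - \alpha_{i\pm\frac12,j,k}$ in (\ref{eq:nm4}) yields $\mu_{i,j,k}^x \geq 0$, hence $\alpha_{i,j,k}^x \leq 1$.

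Finally, the $y$ and $z$ sub-steps of (\ref{eq:fv_split}) have exactly the same algebraic structure as the $x$ sub-step, with the Courant numbers $\beta_y, \beta_z$ and the corresponding numerical fluxes satisfying the analogues of (\ref{eq:nm4}) by assumption. A one-line induction over the three directional steps therefore propagates the bounds through the splitting: $\alpha_{i,j,k} \in [0,1] \Rightarrow \alpha_{i,j,k}^x \in [0,1] \Rightarrow \alpha_{i,j,k}^y \in [0,1] \Rightarrow \overline{\alpha_{i,j,k}} \in [0,1]$. Each arrow uses the single-direction lemma with the appropriate input, relying on the assumption that the additivity of Lemma \ref{lem:lem2} is preserved so that (\ref{eq:cd2}) is available in each direction.

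The main obstacle, or rather the only subtle point, is parsing (\ref{eq:nm4}) correctly: for the chain of inequalities to close, the bound $\beta_x\, \alpha_{i+\frac12,j,k} \leq \alpha_{i,j,k}^x$ must be read as $\beta_x\, \alpha_{i+\frac12,j,k} \leq \alpha_{i,j,k}$ (the outgoing flux cannot exceed the upstream cell content), and similarly for the complementary bound. Under this physical reading, the proof is essentially an algebraic rearrangement; the conceptual content lies entirely in the use of the $\alpha \leftrightarrow \mu$ duality furnished by the \emph{a priori} additivity constraint built into the network in Section \ref{sec:constraints}.
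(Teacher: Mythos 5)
Your proof is correct and follows essentially the same route as the paper's: the $\alpha\leftrightarrow\mu$ duality supplied by the additivity of Lemma \ref{lem:lem2}, followed by rearranging the sub-step into a sum of non-negative terms controlled by (\ref{eq:nm4}); you merely make explicit the induction over the three directional sweeps, which the paper leaves implicit. Your reading of the upper bound in (\ref{eq:nm4}) as $\beta_x\,\alpha_{i+\frac12,j,k}\le\alpha_{i,j,k}$ is also precisely the one the paper's own proof adopts when it identifies $\alpha_{i,j,k}-\beta_x\,\alpha_{i+\frac12,j,k}\ge 0$ with ``the upper bounds in the hypothesis.''
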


The second set of inequalities can be  rewritten for the complementary variable
$0\leq  \mu_{i+\frac12,j,k}   \leq   \frac{1}{\beta_x} \mu_{i,j,k}^{x} $.

\begin{proof}
Let us make the hypothesis that only the bounds from below are satisfied
$
0\leq  \alpha_{i+\frac12,j,k}$ and $0\leq  \mu_{i+\frac12,j,k} $.
An easy interpretation is that non-negative volumes of fluids are exchanged between cells.
Then we observe that (\ref{eq:cd1}-\ref{eq:cd2}) implies
$$
\left\{
\begin{array}{lll}
\alpha_{i,j,k}^{x} \geq  \alpha_{i,j,k}  -  \beta_x
\alpha_{{i+ \frac12,j,k}}, \\
\mu_{i,j,k}^{x} \geq  \mu_{i,j,k}  -  \beta_x
\mu_{{i+ \frac12,j,k}} .
\end{array}
\right.
$$
One observes that $ \beta_x
\alpha_{{i+ \frac12,j,k}}$ (resp. $\beta_x
\mu_{{i+ \frac12,j,k}}$) is some non-negative quantity subtracted from the budget in the cell.
Of course, it is highly recommended to have non-negative cell fractions at the end of the process.
Then it is natural to enforce
$ \alpha_{i,j,k}  -  \beta_x
\alpha_{{i+ \frac12,j,k}} \geq 0$ and $\mu_{i,j,k}  -  \beta_x
\mu_{{i+ \frac12,j,k}} \geq 0$ which are nothing than the upper bounds in the hypothesis (\ref{eq:nm4}) of the Lemma.
One obtains $\alpha_{i,j,k}^{x}\geq 0$ and $\mu_{i,j,k}^{x} \geq 0$ which are equivalent to the claim.
\end{proof}

\begin{remark}
Using the method proposed in \cite{despres2001contact},
all four inequalities in (\ref{eq:nm4})
can be rewritten as
$$
m_{i,j,k}^x \leq  \alpha_{{i+ \frac12,j,k}} \leq
M_{i,j,k}^x
$$
where
$$
m_{i,j,k}^x= \max \left(0,1-   \frac{1}{\beta_x} (1-\alpha_{i,j,k}^{x} ) \right)
\mbox{ and }
M_{i,j,k}^x =
\min \left( 1,   \frac{1}{\beta_x} \alpha_{i,j,k}^{x} \right).
$$
These inequalities are
compatible
under CFL $\beta_x \leq 1$, because  it is evident to check that
the choice  $ \alpha_{{i+ \frac12,j,k}} = \alpha_{{i,j,k}} $ satisfies all inequalities.
\end{remark}

To enforce them, we adopt a simple projection strategy to ensure that the predicted flux $\alpha^\NN(\xx)$ is always included in the admissibility interval.
We predict the flux with the neural network and, if its output is outside the interval, we just select the extremum closer to the prediction.
The formal formula that incorporates all issues previously discussed is in the next definition.

\begin{definition}
The  VOFML flux in 3D is given by the formula
\begin{equation} \label{eq:fflux}
\alpha_{{i+ \frac12,j,k}}^{x, {\rm final} }= P_{[ m_{i,j,k}^x , M_{i,j,k}^x ]}
\widetilde \alpha^\NN(\xx,\beta_x) , \qquad \mathbf x= \left(\alpha_{i', j',k'} \right)_{(i',j',k')\in S_{i,j,k}^m}.
\end{equation}
The same procedure is repeated in  the directional splitting procedure for the directions $y$ and $z$.
If some components of the velocity are negative,  the stencil $ S_{i,j,k}^m $ is upwinded accordingly, and the Courant number is kept non-negative.
\end{definition}

\section{Numerical results}\label{sec:numerical_results}
In this section, we show and discuss numerical experiments empirically proving the better performance of the VOF-ML scheme.
Note that the implementation of the VOF-ML is done in the context of hybridization, see Appendix \ref{sec:hybrid-vof}. Hybridization simply means that
the VOF-ML scheme is activated close to the interface and that another standard and much less costly scheme is activated away from the interface. It results in some reduction of the implementation cost.

For a fair comparison, we only compare the VOF-ML scheme with other schemes exploiting the directional splitting and that do not explicitly reconstruct the interface. In particular, we test the commonly used upwind scheme (UW) and the limited downwind scheme (LD). The former, despite its popularity, is known to be excessively diffusive for sharp interface advection problems, whereas the latter is very accurate and can even exactly reconstruct interfaces that are aligned with the grid. 

The correctness of our implementation has been verified on a simple test (not reported here) with constant velocity $u=[1,1,1]^T$ and where the region A is the cube $[0.2, 0.8]^3$ inside a cubic domain $\Omega=(0,1)^3$.  In particular, we have checked that our implementation satisfies two properties of the upwind scheme: firstly the upwind scheme is exact for a Courant number equal to 1, and secondly the theoretical asymptotic convergence rate is 0.5 in the $L^1$ norm for a Courant number less than 1 \cite{delarue2011probabilistic}. This rate is the asymptotic rate of convergence. However, in our numerical tests we observe a pre-asymptotic convergence rate $<0.5$ which is interpreted as the effect of the excessive numerical diffusion due to the specifc velocity and initial conditions. 


In \cite{despres2020machine, ancellin2023extension}, the authors observed that the VOF-ML performances were significantly better than the upwind ones and slightly better than the limited downwind ones for two-dimensional problems. We now show that, for three-dimensional problems, where more complex interfaces may arise, the advantage of the VOF-ML scheme is much greater.

In all the numerical tests, we use the same neural network (without any retraining or fine-tuning). Using the notation of Section \ref{sec:network}, the chosen architecture is characterized by the following hyperparameters:
\[
\rho(x) = ReLU(x), \quad L = 5, \quad
a_0 = 3^3+1, \quad a_L=1, \quad a_i=50, \,\forall i=1,2,3,4.
\]
Such a network contains 9151 trainable coefficients, which are optimized by minimizing \eqref{eq:loss} through 5000 epochs performed with the ADAM optimizer \cite{kingma2014adam}, followed by 5000 steps of the BFGS optimizer \cite{wright1999numerical}. To be coherent with the fact that the distributions used to create the dataset have different parametrizations, we generate the training dataset using the set of synthetic distributions in Table \ref{tab:dataset}, where a random Courant number 
$$\beta \in [0, 0.6]$$
 is associated with each distribution.

\begin{table}[!ht]
\centering
\begin{tabular}{|c|c|c|}
\hline
Number of samples &Generating geometrical objects &Number of associated parameters\\ \hline\hline
3000 & 1 plane & 3 \\\hline
6000 & 2 planes & 6 \\\hline
9000 & 3 planes & 9 \\\hline
6000 & 1 ellipsoid & 6\\\hline
\end{tabular}
\caption{Distributions used to generate the training dataset $\cal D$.}
\label{tab:dataset}
\end{table}

As discussed in Section \ref{sec:dataset}, to each of these distributions we associate the vector $\mathbf x$ of the corresponding volume fractions. Moreover, to enrich the dataset, for each of its element, we add all the vectors of the form $(\sigma \mathbf x,\beta,\gamma_\sigma)$, for all $\sigma\in{\cal S}_{\neq}$ as discussed in Section \ref{sec:symms}. The dimension of the dataset is thus $(3000+6000+9000+6000)\cdot6=144000$.

As usually done, the entire dataset is split into three smaller dataset: the training dataset ($80\%$ of the data) used to optimize the weights, the validation dataset ($10\%$ of the data) used to avoid overfitting, and the test dataset ($10\%$ of the data) used to test the model.  In particular, we compare the flux reconstruction capability of the three considered methods on the test dataset and show the results in Table \ref{tab:test_compar}. For each method, we compute the Mean Squared Error (MSE) and the Mean Absolute Error (MAE) defined as:
\[
{\rm{MSE}} = \frac{1}{\#{\cal D_{\rm test} }} \sum_{(\boldsymbol{x},\beta)\in{\cal D_{\rm test} }} \left( \alpha^*(\xx,\beta) - \alpha(\xx,\beta)\right)^2, \quad
{\rm{MAE}} = \frac{1}{\#{\cal D_{\rm test} }} \sum_{(\boldsymbol{x},\beta)\in{\cal D_{\rm test} }} \left\vert \alpha^*(\xx,\beta) - \alpha(\xx,\beta)\right\vert,
\]
where $\cal D_{\rm test}$ is the test dataset,  $\#{\cal D_{\rm test}}$ is its size, and $ \alpha^*$ denotes the flux computed with one of the three methods. In Table \ref{tab:test_compar}, we highlight that the flux reconstruction is much more accurate, using both types of errors, when the VOFML method is used. In particular, when considering the MSE metric, VOFML is 43.7 times more accurate than UW and 13.2 times more accurate than LD, whereas when the MAE metric is involved, VOFML is 8.57 times more accurate than UW and 3.70 times more accurate than LD. Such an error is propagated at each time step during the simulation, it is thus crucial to reconstruct the flux in the most accurate way possible.

\begin{table}[!ht]
\centering
\begin{tabular}{|ccc||ccc|}
\hline
\multicolumn{3}{|c||}{MSE} & \multicolumn{3}{c|}{MAE} \\ \hline
\multicolumn{1}{|c|}{UW} & \multicolumn{1}{c|}{LD} & VOFML & \multicolumn{1}{c|}{UW} & \multicolumn{1}{c|}{LD} & VOFML \\ \hline\hline
\multicolumn{1}{|c|}{1.339e-02} & \multicolumn{1}{c|}{4.048e-03} & 3.071e-04  & \multicolumn{1}{c|}{3.995e-02} & \multicolumn{1}{c|}{1.724e-02} & 4.660e-03 \\ \hline
\end{tabular}
\caption{Flux reconstruction error on the test dataset with the different metrics and methods.}
\label{tab:test_compar}
\end{table}

\subsection{Test 1: constant velocity}
Let us consider the domain $\Omega=(-1,1)^3$ and the problem
\begin{equation}\label{eq:prob_const_u}
\partial_t \alpha + \nabla\cdot(\alpha u) = 0,
\end{equation}
with $u=[1,2,3]^T$, periodic boundary conditions on the entire boundary $\partial \Omega$ and initial condition $\alpha_0$. To mimic the Zalezak's initial condition in this three-dimensional domain, let us consider a sphere, from which we subtract a rectangular prism. Let us denote the two three-dimensional regions $R_1$ and $R_2$ as:
\[
R_1 = \{(x,y,z)\in\Omega : x^2 + y^2 + z^2 < 0.4^2\},
\]
\[
R_2 = \{(x,y,z)\in\Omega : |x|<0.2, |y|<0.2, z<0\}.
\]
Then, the region $R_\text{zal}$ is defined as $R_\text{zal} = R_1 \backslash R_2$. The initial condition $\alpha_0$ is the indicator function of the set $\widehat R_\text{zal}$, obtained from $R_\text{zal}$ through a rotation $R_0$ around the center of the domain to avoid mesh-related artifacts during the simulation. The rotation $R_0$ can be obtained combining a anti-clockwise rotations of $\pi/5$, $\pi/7$ and $\pi/9$ radians around the $x$, $y$ and $z$-axis, respectively.

The domain is discretized with a uniform Cartesian grid, with $N_h$ cells in each direction and a total of $N_h^3$ cells. The considered values of $N_h$ are 10, 14, 20, 27, 38, 54, 75,  and 105. The values are chosen to obtain almost equally spaced points in the plots of the error, which are in logarithmic scale.  The initial
volume fractions in each cell are computed with tensorial quadrature rules, higher order quadrature rules would be equally accurate because of the discontinuous nature of the initial condition.

We are interested in simulating the dynamic up to time $T=2$. This way, the exact distribution $\alpha(\cdot, T)$ at time $t=T$ coincides with the initial distribution $\alpha(\cdot, 0)$ at time $t=0$. Indeed, due to the periodic boundary conditions, the initial distribution $\alpha_0$ is advected through the entire domain exactly one, two, and three times in the $x$, $y$, and $z$ direction, respectively. This is useful to evaluate the accuracy of the method by computing the error with respect to the exact solution.

We solve problem \eqref{eq:prob_const_u} with the three proposed methods (upwind, limited downwind, and VOF-ML) on different meshes to understand the convergence with respect to the mesh size. In order to fix the CFL condition, in this case equal to 0.1, 0.2, and 0.3 in the three directions, we fix $\Delta t = 0.1 \Delta x$. At the end of each simulation, we measure the accuracy by computing the three relative $L^1$ errors
\[
E^\text{UW} = \frac{ \Vert \alpha^\text{UW}(T) - \alpha_0\Vert_1}{\Vert \alpha_0\Vert_1}, \hspace{1cm}
E^\text{LD} = \frac{ \Vert \alpha^\text{LD}(T) - \alpha_0\Vert_1}{\Vert \alpha_0\Vert_1}, \hspace{1cm}E^\text{VOF-ML} = \frac{ \Vert \alpha^\text{VOF-ML}(T) - \alpha_0\Vert_1}{\Vert \alpha_0\Vert_1}.
\]
Here $\alpha^\text{UW}(T)$ and $\alpha^\text{LD}(T)$ are the solutions at time $t=T$, obtained with the upwind scheme and the limited downwind scheme, respectively. Instead, $\alpha^\text{VOF-ML}(T)$ is the solution, at time $t=T$, obtained with the hybrid method described in Appendix \ref{sec:hybrid-vof}. The fluxes close to the interface are computed with the neural network, whereas the other ones are computed with the limited downwind scheme.

The results are shown in Fig. \ref{fig:constant_vel_error}. It is evident that the upwind scheme is not able to correctly simulate the dynamic. The excessive diffusion leads to a very large error, which decreases slowly when the mesh is refined. On the other hand, the limited downwind scheme and the VOF-ML perform very similarly when the mesh is coarse enough ($N_h\le 20$ for this specific test), but for finer meshes the VOF-ML method converges faster with respect to the number of cells. The empirical convergence rates with respect to the number of cells, highlighted inside the brackets in the legend of the figure, are 0.16, 0.58, and 1.01 for the upwind, limited downwind, and VOF-ML method, respectively.

\begin{figure}[!ht]
\centering
\includegraphics[width=0.7\columnwidth,keepaspectratio,clip]{./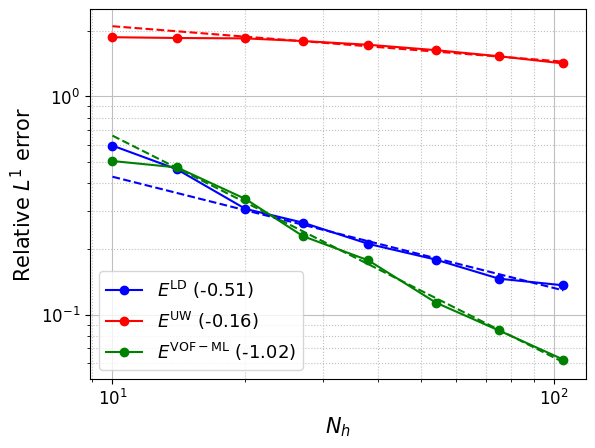}
\caption{Errors for Test 1. The number in the brackets is the convergence rate and the slope of the corresponding regression dashed line.}
\label{fig:constant_vel_error}
\end{figure}

In Figure \ref{fig:mix_cells_constant}, instead, we analyze the evolution of the ratio $R_\text{mix}$ between the number of mixed cells and the total number of cells in the mesh for three different refinements. Note that, in this first test problem, the initial condition is simply translated. Therefore, the size of the exact interface between the two fluids remains the same at any time instant $t\in[0,T]$, and $R_\text{mix}$ should remain approximately constant during the entire simulation. In the subfigures, we report results for the coarsest mesh ($N_h=10$), an intermediate mesh ($N_h=27$), and the finest mesh ($N_h=105$). We can observe that the noise on the curves decreases when the mesh is refined.

Unsurprisingly, $R_\text{mix}$ grows when the upwind scheme is adopted. This happens because the upwind scheme is extremely diffusive and is not able to preserve the sharp interfaces present in the initial condition. On the other hand, $R_\text{mix}$ remains approximately constant with both the limited downwind scheme and the VOF-ML scheme, pretty close to the value associated with the initial condition. The fact that the lowest values of $R_\text{mix}$ are obtained with the limited downwind scheme, even if its overall performance is worse than the VOF-ML one, can be explained by the fact that the scheme is developed to exactly advect one-dimensional interfaces. The diffusion in each update of the directional splitting is thus minimized when the limited downwind scheme is adopted. On the other hand, no information about the diffusion is used to train the neural network, and the constant behaviour of the associated $R_\text{mix}$ is only a consequence of its high accuracy.

\begin{figure}[!ht]
\centering
\begin{subfigure}{0.32\textwidth}
\includegraphics[width=\textwidth]{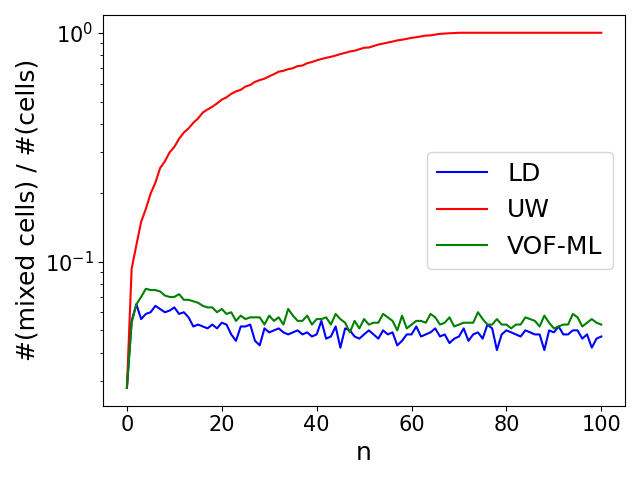}
\caption{$N_h=10$.}
\label{}
\end{subfigure}
\begin{subfigure}{0.32\textwidth}
\includegraphics[width=\textwidth]{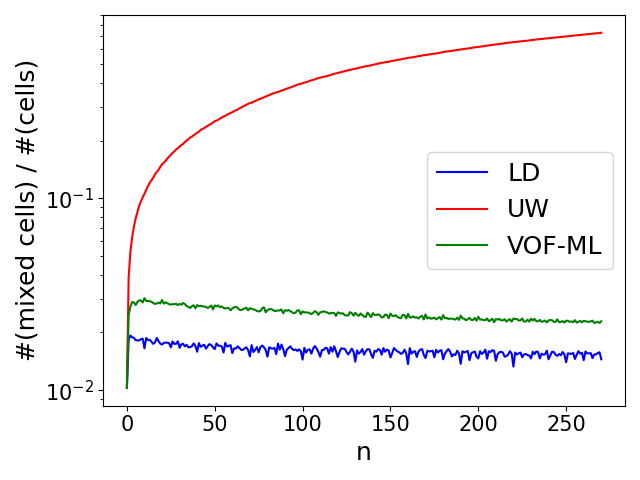}
\caption{$N_h=27$.}
\label{}
\end{subfigure}
\begin{subfigure}{0.32\textwidth}
\includegraphics[width=\textwidth]{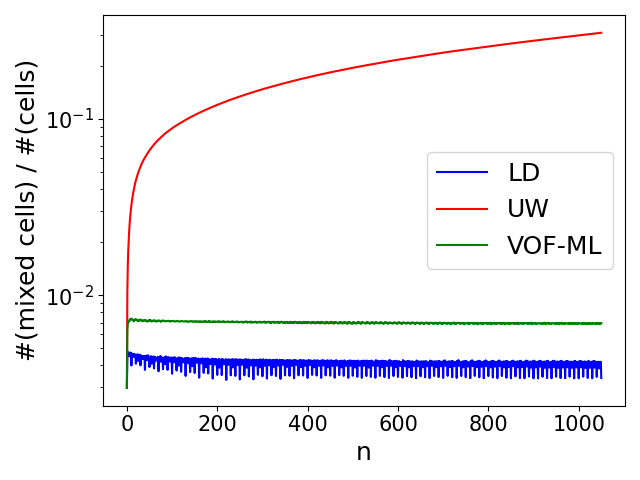}
\caption{$N_h=105$.}
\label{}
\end{subfigure}
\caption{Test 1. Ratio of mixed cells over the total number of cells. Evolution (in logarithmic scale) during the simulation for a coarse, intermediate, and fine mesh.}
\label{fig:mix_cells_constant}
\end{figure}

We also show, for the same meshes, the values of the ratio $R_\text{mix}^{T/0}$ between $R_\text{mix}$ at the end of the simulation and the initial condition. A value larger than 1 means that the interface between the two fluids expanded because more mixed cells are present, whereas a value smaller than 1 means that the interface shrank, for the opposite reason. In Table \ref{tab:rmix_ratio_constant}, we can observe that $R_\text{mix}^{T/0}$ decreases only with the limited downwind scheme when the mesh is refined. However, with both the limited downwind scheme and the VOF-ML scheme, $R_\text{mix}^{T/0}$ is of order 1, whereas it increases dramatically with the upwind scheme, highlighting once more the excessive diffusion of the method.

\begin{table}[!ht]
\centering
\begin{tabular}{|c||c|c|c|}
\hline
$N_h$ & \hphantom{AA}UW\hphantom{AA} & \hphantom{AA}LD\hphantom{AA} & VOF-ML \\ \hline\hline
10 &  35.71 &  1.68 & 1.89 \\ \hline
27 & 71.12 & 1.41 & 2.23 \\ \hline
105 & 103.3 & 1.14 & 2.33 \\ \hline
\end{tabular}
\caption{Test 1. Ratios $R_\text{mix}^{T/0}$ for different meshes.}
\label{tab:rmix_ratio_constant}
\end{table}

\subsection{Test 2: velocity fields $u=[u_1, u_2, u_3]$ with $\frac{\partial u_1}{\partial x}=\frac{\partial u_2}{\partial y} =\frac{\partial u_3}{\partial z} =0$}
Let us consider a slightly more general test case. We consider equation \eqref{eq:prob_const_u} on the domain $\Omega=(0,1)^3$, before the time limit $T=1$, with periodic boundary conditions and with a velocity field $u=[u_1, u_2, u_3]$ such that:
\begin{equation}\label{eq:vel_directions}
\begin{aligned}
u_1 &= 25 \sin(2\pi y)^2\sin(2\pi z)  y(y-1)z(z-1)  \cos(\pi t/T), \\
u_2 &= 25\sin(2\pi z)^2 \sin(2\pi x)  z(z-1) x(x-1) \cos(\pi t/T), \\
u_3 &= 25 \sin(2\pi x)^2\sin(2\pi y)   x(x-1)y(y-1) \cos(\pi t/T).
\end{aligned}
\end{equation}
Note that $\frac{\partial u_1}{\partial x}=\frac{\partial u_2}{\partial y} =\frac{\partial u_3}{\partial z} =0$. It is thus evident that $u$ is divergence-free, but the stricter property on the three derivatives ensures that, during the directional splitting, the intermediate updates are perfectly conservative (up to machine precision). This is sufficient to ensure global conservation, even if a directional splitting approach is adopted.

Inspired by an initial distribution considered in \cite{ancellin2023extension}, let us also consider a more complex initial distribution. Let us define the following regions:
\[
R_\text{sphere} = \{(x,y,z)\in\Omega : (x-0.5)^2 + (y-0.5)^2 + (z-0.5)^2 < 0.2^2\},
\]
\[
R_{3,x} =  \{(x,y,z)\in\Omega : |x-0.5|<0.3, |y-0.5|<0.075, |z-0.5|<0.075\},
\]
\[
R_{3,y} =  \{(x,y,z)\in\Omega : |x-0.5|<0.075, |y-0.5|<0.3, |z-0.5|<0.075\},
\]
\[
R_{3,z} =  \{(x,y,z)\in\Omega : |x-0.5|<0.075, |y-0.5|<0.075, |z-0.5|<0.3\},
\]
We define $R_\text{init}$ as $R_\text{init}=R_\text{sphere}\cup R_{3,x}\cup R_{3,y}\cup R_{3,z}$. As in the previous case, the initial condition $\alpha_0$ is the indicator function of the set $\widehat R_\text{init}$ obtained from $R_\text{init}$ through the rotation $R_0$ defined before.

As in the previous case, we fix the time step size $\Delta t$ as $\Delta t = 0.1 \Delta x$ and perform three simulations for each mesh, one with the upwind scheme, one with the limited downwind scheme, and one with the VOF-ML scheme. The results are reported in Figure \ref{fig:directions_error}. Once more, the upwind scheme is excessively diffusive and not competitive with the other schemes. The limited downwind scheme and the VOF-ML scheme reach similar accuracies on coarse meshes, but the latter is characterized by a faster convergence rate with respect to $N_h$ and rapidly becomes more accurate than the former when the mesh is refined.

\begin{figure}[!ht]
\centering
\includegraphics[width=0.7\columnwidth,keepaspectratio,clip]{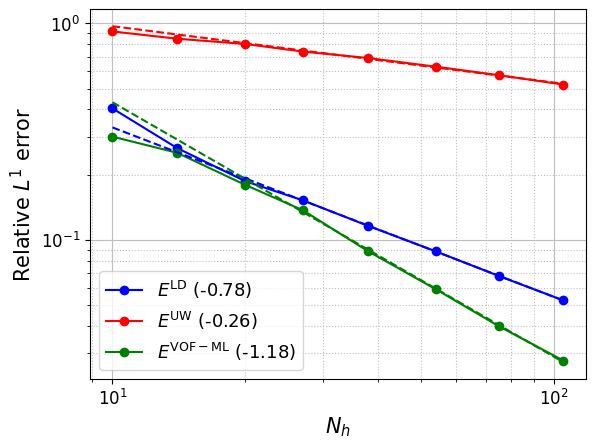}
\caption{Errors for Test 2. The number in the brackets is the convergence rate and the slope of the corresponding regression dashed line.}
\label{fig:directions_error}
\end{figure}

In Figure \ref{fig:mix_cells_directions}, we show the evolution of $R_\text{mix}$ on this second test. Once more, it is evident that the upwind scheme is excessively diffusive, whereas the limited downwind scheme and the VOF-ML scheme are able to capture the sharp interfaces. Note that, in this case, the initial distribution is deformed by the velocity field \eqref{eq:vel_directions} and therefore $R_\text{mix}$ should not remain constant. However, because of the factor $\cos(\pi t/T)$ in \eqref{eq:vel_directions}, the velocity field is symmetrical, in time, around the instant $t=T/2$. This implies that the initial distribution, deformed up to time $t=T/2$, is deformed back to the initial condition exactly in the same way, implying, as a consequence, a symmetrical behaviour in the expected evolution of $R_\text{mix}$. This symmetry is more evident in Figure
\subref{fig:mix-cells-directions-fine}
because the fine mesh minimizes the additional noise that may ruin it.

\begin{figure}[!ht]
\centering
\begin{subfigure}{0.32\textwidth}
\includegraphics[width=\textwidth]{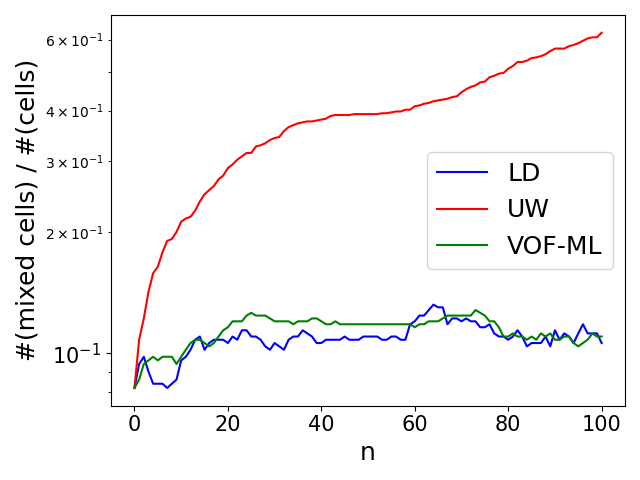}
\caption{$N_h=10$.}
\label{}
\end{subfigure}
\begin{subfigure}{0.32\textwidth}
\includegraphics[width=\textwidth]{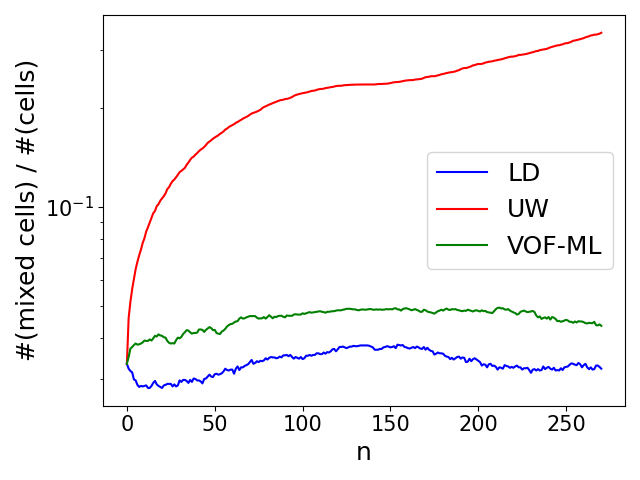}
\caption{$N_h=27$.}
\label{}
\end{subfigure}
\begin{subfigure}{0.32\textwidth}
\includegraphics[width=\textwidth]{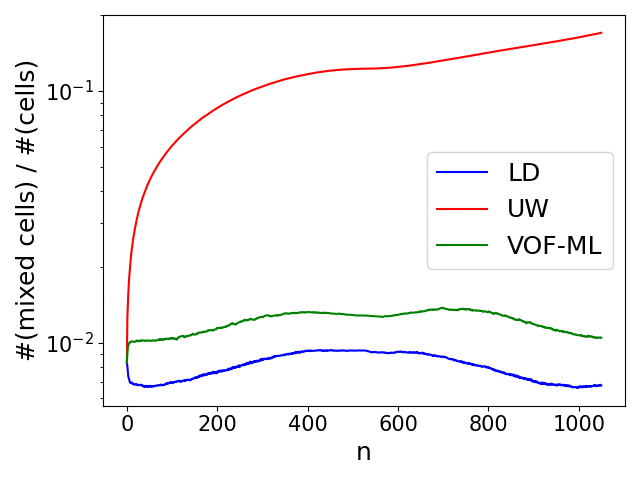}
\caption{$N_h=105$.}
\label{fig:mix-cells-directions-fine}
\end{subfigure}
\caption{Test 2. Ratio of mixed cells over the total number of cells. Evolution (in logarithmic scale) during the simulation for a coarse, intermediate, and fine mesh.}
\label{fig:mix_cells_directions}
\end{figure}

The ratios $R_\text{mix}^{T/0}$ are shown in Table \ref{tab:rmix_ratio_directions}. Once more, only the upwind scheme's values grow excessively. It is interesting to remark that, in this case, $R_\text{mix}^{T/0}$ decreases below 1 for the upwind scheme, highlighting a slight antidiffusivity property of the method. In particular, the values of the VOF-ML scheme and of the limited downwind scheme for the last mesh are almost equidistant from the target value 1.

\begin{table}[!ht]
\centering
\begin{tabular}{|c||c|c|c|}
\hline
$N_h$ & \hphantom{AA}UW\hphantom{AA} & \hphantom{AA}LD\hphantom{AA} & VOF-ML \\ \hline\hline
10 &  7.61 & 1.29 & 1.34 \\ \hline
27 & 10.14 & 0.97 & 1.30 \\ \hline
105 & 20.45 & 0.81 & 1.26 \\ \hline
\end{tabular}
\caption{Test 2. Ratios $R_\text{mix}^{T/0}$ for different meshes.}
\label{tab:rmix_ratio_directions}
\end{table}

\subsection{Test 3: general divergence-free velocity fields}
Lastly, inspired by a test in \cite{fakhreddine2024directionally}, we consider a velocity field $u=[u_1, u_2, u_3]$ where
\begin{equation}\label{eq:vel_variable}
\begin{aligned}
u_1 &= 2\sin(\pi x)^2 \sin(2\pi y) \sin(2\pi z)  \cos(\pi t/T), \\
u_2 &= -\sin(\pi y)^2 \sin(2\pi x) \sin(2\pi z) \cos(\pi t/T), \\
u_3 &= -\sin(\pi z)^2 \sin(2\pi x) \sin(2\pi y) \cos(\pi t/T).
\end{aligned}
\end{equation}
Such velocity field is divergence-free, but the derivatives $\frac{\partial u_1}{\partial x}$, $\frac{\partial u_2}{\partial y}$ and $\frac{\partial u_3}{\partial z}$ are different than 0. This implies that the velocity used in the update in a single direction is not divergence-free, and therefore the mass conservation may be violated.

Let us denote by $\alpha_A$ and $\alpha_B$ the volume fractions of fluid A and B, respectively. Our goal is to ensure that $0\le\alpha_A\le1$, $0\le\alpha_B\le1$, $\alpha_A+\alpha_B=1$ in each cell and at each time step. As discussed in Section \ref{sec:non-neg}, a simple flux projection is sufficient to avoid negative values of $\alpha_A$ and $\alpha_B$, but a more elaborate strategy is needed to ensure the other constraints.

Let us assume that, at a given instant, all volume fractions $\alpha^n$ are in the interval $[0,1]$, and let us define $\alpha_A^n=\alpha^n$ and $\alpha_B^n=1-\alpha^n$. Because of the interchangeability of the fluids, both $\alpha_A$ and $\alpha_B$ satisfy the equation:
\[
\partial_n \alpha_\dag + \nabla\cdot(\alpha_\dag u) = 0, \hspace{1cm} \dag\in\{A,B\},
\]
that can be discretized as discussed in Section \ref{sec:nn_in_FV} via directional splitting. After a single directional update for both $\alpha_A$ and $\alpha_B$, we obtain the intermediate volume fractions $\widetilde \alpha_A^{n+1}$ and $\widetilde \alpha_B^{n+1}$ which may be greater than 1 and whose sum may be different than 1. We thus compute the normalized volume fractions $\alpha_A^{n+1}$ and $\alpha_B^{n+1}$ as
\[
\alpha_A^{n+1} = \frac{\widetilde \alpha_A^{n+1}}{\widetilde \alpha_A^{n+1} + \widetilde \alpha_B^{n+1}}, \hspace{1cm}
\alpha_B^{n+1} = \frac{\widetilde \alpha_B^{n+1}}{\widetilde \alpha_A^{n+1} + \widetilde \alpha_B^{n+1}}.
\]
Note that $\alpha_A^{n+1}$ and $\alpha_B^{n+1}$ satisfy all the required constraints. It is thus possible to define $\alpha^{n+1}$ as $\alpha^{n+1}=\alpha_A^{n+1}$ to obtain an update that does not violate anymore the mass conservation constraints. Therefore, the described procedure is able to preserve the mass conservation of the initial volume fractions for a single directional update. Since the initial condition of our problem at time $t=0$ does not violate the mass conservation constraints, the described procedure can be iteratively applied to advect the initial condition while preserving the mass of both fluids.

For efficiency reasons, we highlight that, after having computed a flux $\alpha_A^*$ to update $\alpha_A^n$, the flux $\alpha_B^*=1-\alpha_A^*$ is already available and does not require any further (possibly expensive) computations. For this reason, the computational cost of the procedure is almost the same as the one of the simple update used in the previous sections.

We can finally consider the problem
\begin{equation*}
\partial_t \alpha + \nabla\cdot(\alpha u) = 0,
\end{equation*}
with the velocity defined in \eqref{eq:vel_variable}, on the domain $\Omega=(0,1)^3$, up to time $T=2$, and with the initial condition as the indicator function of a sphere centered in $[0.35, 0.35, 0.35]$ and radius 0.15.

The convergence rates are shown in Figure \ref{fig:general_error}. We can observe that, due to the more complex nature of the problem, a more evident pre-asymptotic phase is present for meshes with less than $N_h=20$ cells in each direction. However, the error decays are similar to the ones shown in the previous cases, with the VOF-ML that significantly outperforms the limited downwind scheme and the upwind scheme that is excessively diffusive and converges very slowly.

\begin{figure}[!ht]
\centering
\includegraphics[width=0.7\columnwidth,keepaspectratio,clip]{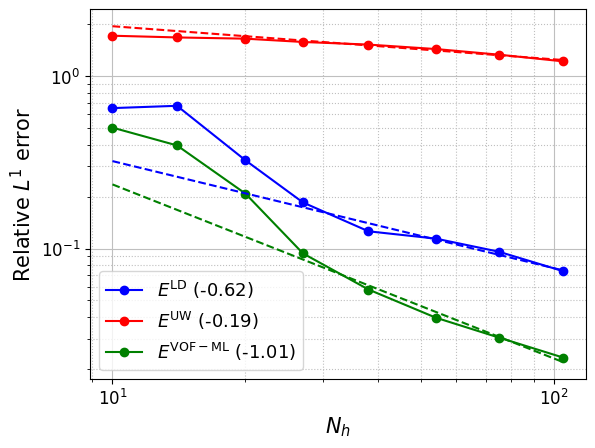}
\caption{Errors for Test 3. The number in the brackets is the convergence rate and the slope of the corresponding regression dashed line.}
\label{fig:general_error}
\end{figure}

Also in this case, we can analyze the behaviour of $R_\text{mix}$, plotted in Figure \ref{fig:mix_cells_general}. As in the previous test problem, the upwind scheme is not able to accurately preserve the initial discontinuous interface, and the evolution of $R_\text{mix}$ is symmetrical when the limited downwind or the VOF-ML schemes are adopted. Because of the similarity between Figure \ref{fig:mix_cells_directions} and Figure \ref{fig:mix_cells_general}, we can infer that the additional normalization introduced in this last test case does not interfere with the previously observed properties of the methods.

\begin{figure}[!ht]
\centering
\begin{subfigure}{0.32\textwidth}
\includegraphics[width=\textwidth]{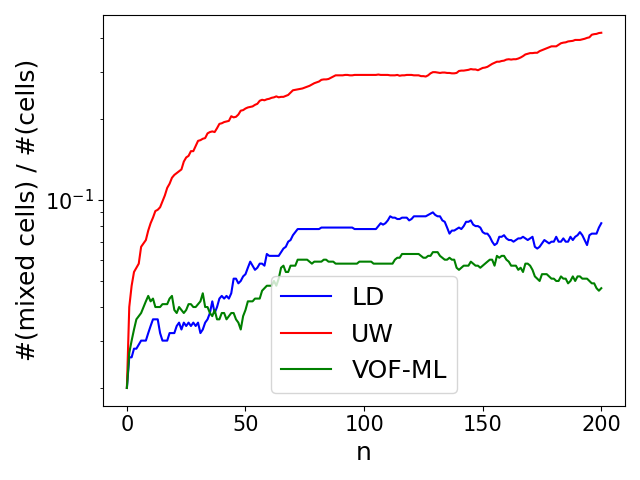}
\caption{$N_h=10$.}
\label{}
\end{subfigure}
\begin{subfigure}{0.32\textwidth}
\includegraphics[width=\textwidth]{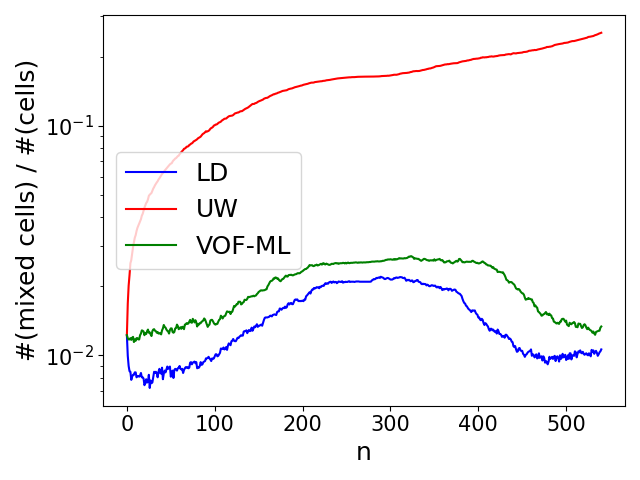}
\caption{$N_h=27$.}
\label{}
\end{subfigure}
\begin{subfigure}{0.32\textwidth}
\includegraphics[width=\textwidth]{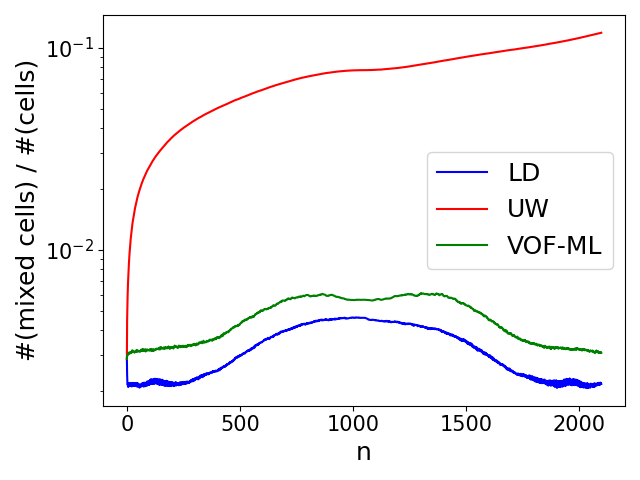}
\caption{$N_h=105$.}
\label{}
\end{subfigure}
\caption{Test 3. Ratio of mixed cells over the total number of cells. Evolution (in logarithmic scale) during the simulation for a coarse, intermediate, and fine mesh.}
\label{fig:mix_cells_general}
\end{figure}

In Table \ref{tab:rmix_ratio_general} we show the values of $R_\text{mix}^{T/0}$. As in the previous cases, only the limited downwind and the VOF-ML display values close to 1, with the VOF-ML scheme that is particularly close to such a target value.

\begin{table}[!ht]
\centering
\begin{tabular}{|c||c|c|c|}
\hline
$N_h$ & \hphantom{AA}UW\hphantom{AA} & \hphantom{AA}LD\hphantom{AA} & VOF-ML \\ \hline\hline
10 &  20.95 & 4.10 & 2.35  \\ \hline
27 &  20.75 & 0.87 & 1.09 \\ \hline
105 & 41.13 & 0.75 & 1.07  \\ \hline
\end{tabular}
\caption{Test 3. Ratios $R_\text{mix}^{T/0}$ for different meshes.}
\label{tab:rmix_ratio_general}
\end{table}

\section{Conclusion}\label{sec:conclusion}
In this paper, we have presented a machine learning based three-dimensional Volume Of Fluid (VOF) scheme on Cartesian grids, where a fully-connected feed-forward neural network is used to approximate the unknown function mapping the volume fractions and the Courant number to the target flux used in the Finite Volume update. Once the network is trained, it is therefore possible to directly evaluate the flux, without the need to reconstruct the interface in advance, which is the current bottleneck of most VOF schemes. Moreover, the absence of the interface reconstruction step allows a greater flexibility because it allows the method to accurately handle less regular interfaces, commonly present in the presence of more than two materials, where classical linear interfaces are not enough to provide good approximations.

The implementation of the method is also easier than most three-dimensional VOF schemes, because the flux is directly computed by evaluating a trained neural network, which can be efficiently done thanks to any of the several existing machine-learning libraries. In order to train the network, we propose a standard fitting procedure on a synthetic dataset, which can be generated by using only geometrical information. This allows the user to train a single network and to use it to solve different equations, on different domains, and with different initial and boundary conditions without any retraining. Nevertheless, the training dataset can be adapted, for example, by considering more or less non-regular interfaces, if some additional knowledge is available. A great focus is also devoted to the enforcement of physical constraints, which is achieved by suitably modifying the neural network.

The method has been validated on advection problems with accurate results for initial condition problems. Future perspectives include the analysis of the scheme on more physically relevant problems and the comparison with more complex or machine-learning based VOF schemes.
One of our next goals of research will be to couple the tridimensional  VOF-ML scheme to a more general hyperbolic scheme for solving compressible Euler equations with internal free boundaries in 3D.

\section*{Acknowledgements}
This work is funded by PEPR/IA  (https://www.pepr-ia.fr/).
Both authors thank St\'ephane Jaouen for fruitful discussions during the elaboration stage of this work.

\bibliographystyle{siam}
\bibliography{bibliography}

\appendix

\section{Parametrization of the training configurations}\label{sec:params}
 The considered training configurations are parametrized in the following way, in order for them to be described by parameters that vary in suitable hyper-cubes. This is important to efficiently sample random configurations in these high-dimensional parameter spaces. We always consider a normalized stencil, i.e. a stencil centered in the origin and comprising cubic cells of edge 1.

We start by introducing an important sampling strategy to uniformly sample points on the unit sphere, because it will be widely used in the following discussion.

\begin{remark}\label{rem:sphere_sampl}
A strategy to uniformly sample points $[a,b,c]\in\R^3$ on the unit sphere is to project a uniform distribution of points $[\alpha,\beta,\gamma]\in\R^3$ on a cylinder to the sphere. Such a simple method is based on Archimedes' theorem, see \cite{pitman2012archimedes} for more details. Given $\theta_1\in[0,2\pi]$ and $\theta_2\in[-1,1]$, the distribution on a cylinder of height 2 and radius 1 is obtained as
$
a = \cos(\theta_1)$, $
b = \sin(\theta_1)$ and $
c = \theta_2$.
It  is then mapped to the unit sphere as
$
\alpha = \sqrt{1-c^2} \cdot a$, $
\beta = \sqrt{1-c^2} \cdot b$ and
$\gamma = c$.
Only two parameters are needed to parameterize such a distribution.
\end{remark}

We now describe the parametrization of the selected distributions.

\begin{itemize}
\item The region A is a half-space. The plane separating the region A and the region B is parametrized as:

\[
p_1(x,y,z;\theta) = a_1(\theta)x + b_1(\theta)y + c_1(\theta)z - d_1(\theta),
\]
where $\theta=[\theta_1,\theta_2,\theta_3]\in \R^3$ is a vector of free parameters and the region A is defined as the set of points $(x,y,z)\in\R^3$ such that $p_1(x,y,z;\theta) <0$.

In order to obtain a uniform distribution of all the existing planes, we want $n_1(\theta) = [a_1(\theta),b_1(\theta),c_1(\theta)]$ to be uniformly distributed on the unit sphere (i.e. $\Vert n_1(\theta)\Vert_2=1$) and $d_1(\theta)$ to vary between in $[0,  \frac{\Vert n_1(\theta)\Vert_1}{2}]$. This ensures that the region A and the region B coexist in the central cell of the stencil.

The first two parameters $\theta_1$ and $\theta_2$ are involved in the sampling of all possible vectors $n_1(\theta)$ as in Remark \ref{rem:sphere_sampl}, whereas $d_1$ is computed as $d_1(\theta) = \theta_3 \frac{\Vert n_1(\theta)\Vert_1}{2}$, with $\theta_3\in[0,1]$. Therefore, the parameter $\theta$ must assume values in $[0,2\pi]\times[-1,1]\times[0,1]$.

Examples of distributions that can be generated with this approach are represented in Figure \ref{fig:1planes}.

\item The region A is the intersection between two half-spaces. The interfaces of the two half-spaces can be defined as:
\[
p_i(x,y,z;\theta) = a_i(\theta)x + b_i(\theta)y + c_i(\theta)z - d_i(\theta), \hspace{1cm} i=1,2,
\]
with $\theta\in\R^6$. Note that it is always possible to sample the two planes independently with the strategy described in the previous case, but this would lead to two main issues. It might happen that the region A, if non-empty, is outside the stencil, or otherwise it might coincide with a half-space, at least in the central cell. To avoid these issues and ensure that the two planes intersect inside the central cell, we have to modify the sampling strategy. The main idea is to generate two parametric planes intersecting on the $y$-axis and to apply parametric rotations and translations to move them such that the origin is mapped into a random point inside the central cell.

Given the first parameter $\theta_1\in[0,2\pi]$, we define the two initial planes intersecting on the $y$-axis as
\[
\widetilde p_1(x,y,z) = z, \hspace{1cm} \widetilde p_2(x,y,z;\theta_1) = -\sin(\theta_1)x + \cos(\theta_1)z,
\]
with normal vectors $\widetilde n_1 = [0,0,1]^T$ and $\widetilde n_2 = [ -\sin(\theta_1),0, \cos(\theta_1)]^T$.

Let $\theta_2$, $\theta_3$,  $\theta_4\in[0,2\pi]$ be the angles of rotation around the three Cartesian axes. The three corresponding rotation matrices are:
\[
R_x(\theta_2) = \begin{bmatrix}
1 & 0 & 0\\
0 & \cos(\theta_2) & -\sin(\theta_2)\\
0 & \sin(\theta_2) & \cos(\theta_2)
\end{bmatrix},
\hspace{1cm}
R_y(\theta_3) = \begin{bmatrix}
\cos(\theta_3) & 0 & \sin(\theta_3)\\
0 & 1 & 0\\
-\sin(\theta_3) & 0 & \cos(\theta_3)
\end{bmatrix},
\]
\[
R_z(\theta_4) = \begin{bmatrix}
\cos(\theta_4) & -\sin(\theta_4) & 0\\
\sin(\theta_4) & \cos(\theta_4) & 0\\
0&0&1
\end{bmatrix},
\]
whereas the general three-dimensional rotation matrix can be defined as $R=R_x(\theta_2)R_y(\theta_3)R_z(\theta_4)$. Such a matrix is used to rotate $\widetilde p_1$ and $\widetilde p_2$ by multiplying their normal vectors. We thus define $n_1$ and $n_2$ as $n_1=R\widetilde n_1$ and $n_2=R\widetilde n_2$. Note that the origin belongs to both rotated planes
$
\widehat p_i(x,y,z) = n_i^T\begin{bmatrix} x\\y\\z \end{bmatrix}$, $ i=1,2$.
To obtain the sought planes $p_1(x,y,z;\theta) $ and $p_2(x,y,z;\theta) $, we apply a last translation map. Since translations in the direction of the intersection line between the two planes would not affect the planes position, the direction of the applied translation lies in the plane orthogonal to such direction. The admissible translation vectors are thus parametrized in polar coordinates in such plane, where $\theta_5\in[0,2\pi]$ represents the angle and $\theta_6\in[0,1]$ the scaling factor of the translation distance (i.e. when $\theta_6=0$ no translation is applied, when $\theta_6=1$ the translation is such that a single point of the line belonging to both planes lies in the central cell).

Therefore, the parameter $\theta\in\R^6$ describing the two final planes can vary in $[0,2\pi]^5\times [0,1]$. Once the two planes $p_1(x,y,z;\theta) $ and $p_2(x,y,z;\theta) $ are known, the region A can be easily defined as the set of points $(x,y,z)\in\R^3$ such that $p_1(x,y,z;\theta) <0$ and $p_2(x,y,z;\theta) <0$.

Examples of distributions that can be generated with this approach are represented in Figure \ref{fig:2planes}.

\item The region A is the intersection of three half-spaces. The idea is similar to the previous case, we define three planes
\[
p_i(x,y,z;\theta) = a_i(\theta)x + b_i(\theta)y + c_i(\theta)z - d_i(\theta), \hspace{1cm} i=1,2,3,
\]
with $\theta\in\R^9$. The first 6 parameters are chosen as in the previous case for $p_1$ and $p_2$, whereas the remaining three are chosen as in the case with one plane, but adapting the definition of $d_3$ to ensure the intersection of the three planes exists and lies in the central cell. In particular, we change the sign of $d_3$ if the region $p_3(x,y,z;\theta) <0$ does not intersects the region where $p_1(x,y,z;\theta) <0$ and $p_2(x,y,z;\theta) <0$. Therefore, $\theta\in\R^0$ can vary in $[0,2\pi]^5\times [0,1]\times [0,2\pi]\times[-1,1]\times[0,1]$ and the region A is the set of points $(x,y,z)\in\R^3$ such that $p_1(x,y,z;\theta) <0$, $p_2(x,y,z;\theta) <0$ and $p_3(x,y,z;\theta) <0$.

Examples of distributions that can be generated with this approach are represented in Figure \ref{fig:3planes}.
\item The region A is the interior part of an ellipsoid. Let us consider the following parametric ellipsoid:
\[
s(x,y,z;\theta) = \frac{(x-x_0(\theta))^2}{a(\theta)^2} + \frac{(y-y_0(\theta))^2}{b(\theta)^2} + \frac{(z-z_0(\theta))^2}{c(\theta)^2} - 1,
\]
where $\theta\in\R^6$. Once more, a naive parametrization may lead to ellipsoids outside the central cell, we thus propose the following parametrization.

The center $[x_0(\theta), y_0(\theta), z_0(\theta)]$ is chosen as
\[
[x_0(\theta), y_0(\theta), z_0(\theta)] = [\theta_1, \theta_2, \theta_3] \in [-\sqrt{3(m+0.5)}, \sqrt{3(m+0.5)}]^3
\]
where $[-m-0.5, m+0.5]^3$ is the region covered by the stencil (we remind that $m$ is the stencil margin).

Let us now consider the values $a(\theta)$, $b(\theta)$ and $c(\theta)$. Three temporary values $[\tilde a(\theta),\tilde b(\theta),\tilde c(\theta)]$ are selected on the unit sphere as in Remark \ref{rem:sphere_sampl} sampling the parameters $\theta_4\in[0,2\pi]$ and $\theta_5\in[-1,1]$. Moreover, the last parameter $\theta_6$ can vary in $[0,1]$ and is used as a nonlinear scaling to compute $[ a(\theta), b(\theta), c(\theta)]$ from the values $[\tilde a(\theta),\tilde b(\theta),\tilde c(\theta)]$. The value $\theta_6=0$ is associated with a scaling such that a single point (or more generally, a finite number of points) of the ellipsoid surface intersects the border of the central cell. The value $\theta_6=1$ is associated with a scaling such that the entire central cell is inside the ellipsoid and the ellipsoid volume is minimal.

Therefore, $\theta\in\R^6$ can vary in $[-\sqrt{3(m+0.5)}, \sqrt{3(m+0.5)}]^3\times [0,2\pi]\times[-1,1]\times[0,1]$ and the region A is defined as the set of points $(x,y,z)\in\R^3$ such that $s(x,y,z;\theta)<0$.

Examples of distributions that can be generated with this approach are represented in Figure \ref{fig:ellips}. Here, the surface of the ellipsoid is represented by the vertices of the polyhedron used to approximately compute its volume. Note that, to obtain a similar integration precision for any ellipsoid, more points are needed for surfaces with larger curvature. This is automatically handled by the fact that we approximate a sphere using the points of the Fibonacci spiral \cite{keinert2015spherical}, and then we map the surface of the sphere to the surface of the ellipsoid through an affine transformation (see Section \ref{sec:dataset}).
\end{itemize}

\section{Hybridization of the VOF flux}\label{sec:hybrid-vof}


One of the advantages of neural networks is that, after an usually expensive training phase, evaluating the trained neural network is extremely efficient. It is thus possible to train a model on a large dataset just once, and then use the trained network multiple times, without the need to retrain. However, evaluating a neural network is still more expensive than some simple and less accurate VOF schemes. Therefore, we propose a hybrid strategy in which the VOF-ML scheme is used close to the interface, where high accuracy is required, and a less expensive scheme is adopted far from it, where the computation of the flux is trivial.

Observe that, if a volume fraction is exactly equal to 1, it means that inside the corresponding cell there is only fluid A. Therefore, the entirety of the flux going out from such a cell will be composed of fluid A, i.e. the flux will be 1 too. Analogously, if a volume fraction is exactly equal to 0, also the flux fraction going out from the cell will be 0. We thus fix a small threshold $\varepsilon_\text{mark}>0$ to mark the cells that are far enough from these two trivial cases.  In Section \ref{sec:numerical_results}, we set $\varepsilon_\text{mark} =0.01$, but this value may be chosen according to the used mesh size.

At the beginning of each iteration, we mark all the cells where we want to compute the flux going out from the cell with the VOF-ML scheme. All the remaining fluxes are computed with the Limited Downwind scheme, which is cheaper but less precise. For each time step, the marking rule is
\[
\text{if }\,\,\varepsilon_\text{mark} \le \alpha_{i,j,k}^n \le 1-\varepsilon_\text{mark} \,\, \implies \,\, \text{the cell }C_{i,j,k}\text{ is marked}.
\]
Such a marking procedure can be applied to each intermediate step of \eqref{eq:fv_split} to further enhance the efficiency of the method by changing the adopted flux computation scheme more often. The marked cells are named \textit{mixed cells}, whereas the unmarked cells are named \textit{pure cells}.

\end{document}